\numberwithin{equation}{section}
\theoremstyle{plain}
\newtheorem{thm}{Theorem}[section]
\newtheorem{lem}[thm]{Lemma}
\newtheorem{prop}[thm]{Proposition}
\newtheorem{cor}[thm]{Corollary}
\newtheorem*{thm*}{Theorem}
\newtheorem*{lem*}{Lemma}
\newtheorem*{prop*}{Proposition}
\newtheorem*{cor*}{Corollary}
\theoremstyle{definition}
\newtheorem{defn}[thm]{Definition}
\newtheorem*{defn*}{Definition}
\newtheorem{ex}[thm]{Example}
{}
\newtheorem{rem}[thm]{Remark}
\newtheorem*{rem*}{Remark}
\newtheorem*{ack}{Acknowledgements}{}
\theoremstyle{remark}
{}
{}
{}
\def\to{\longrightarrow} 
\def\NN{\mathbb{N}}
\def\ZZ{\mathbb{Z}}
\def\sfD{\mathsf{D}}
\def\sfH{\mathsf{H}}
\def\mcA{\mathcal{A}}
\def\mcB{\mathcal{B}}
\def\mcF{\mathcal{F}}
\def\op{\mathrm{op}}
\def\b{\mathrm{b}}
\def\base{k}
\def\A{\Lambda}
\def\DA{\mathcal{E}}
\def\AC{\mathcal{S}}
\DeclareMathOperator{\add}{add}
\DeclareMathOperator{\Add}{Add}
\DeclareMathOperator{\Hom}{Hom}
\DeclareMathOperator{\End}{End}
\DeclareMathOperator{\id}{id}
\DeclareMathOperator{\modu}{\mathsf{mod}}
\DeclareMathOperator{\Modu}{\mathsf{Mod}}
\DeclareMathOperator{\proj}{\mathsf{proj}}
\DeclareMathOperator{\Perf}{\mathsf{Perf}}
\DeclareMathOperator{\RHom}{\mathbf{R}Hom}
\DeclareMathOperator{\coh}{coh}
\DeclareMathOperator{\loc}{\mathrm{loc}}
\DeclareMathOperator{\thick}{thick}
\DeclareMathOperator{\rad}{rad}
\let\cal\mathcal
\def\Ascr{{\cal A}}
\def\Bscr{{\cal B}}
\let\blb\mathbb
\def \ZZ{{\blb Z}}
\def \NN{{\blb N}}
\def\proj{\operatorname{proj}}
\def\id{\text{id}}
\def\coh{\mathop{\text{\upshape{coh}}}}
\def\rad{\operatorname {rad}}
\def\Hom{\operatorname {Hom}}
\def\End{\operatorname {End}}
\def\RHom{\operatorname {RHom}}
\def\ker{\operatorname {ker}}
\def\End{\operatorname {End}}
\def\id{{\operatorname {id}}}
\def\add{\operatorname {add}}
\def\r{\rightarrow}
\newdimen\uboxsep \uboxsep=1ex
\def\uboxn#1{\vtop to 0pt{\hrule height 0pt depth 0pt\vskip\uboxsep
\hbox to 0pt{\hss #1\hss}\vss}}
\def\uboxs#1{\vbox to 0pt{\vss\hbox to 0pt{\hss #1\hss}
\vskip\uboxsep\hrule height 0pt depth 0pt}}
\let\oldmarginpar\marginpar
\long\def\marginpar#1{\oldmarginpar{\raggedright \tiny \baselineskip 5pt #1}}
\definecolor{ruta2}{rgb}{0.409, 0.459, 0.208}
\def\Perf{\operatorname{Perf}}
\DeclareMathOperator\PERF{\underline{\mathrm{Perf}}}
\def\aa{\mathfrak{a}}
\def\bb{\mathfrak{b}}
\definecolor{internationalkleinblue}{rgb}{0.0, 0.18, 0.65}
\title[Proper connective differential graded algebras \ldots]{Proper connective differential graded algebras and their geometric realizations}
\author{Theo Raedschelders}
\thanks{The first author is supported by a postdoctoral fellowship from the Research Foundation - Flanders (FWO)}
\address{Theo Raedschelders, Departement Wiskunde, Vrije Universiteit Brussel, 
Pleinlaan 2,
B-1050 Elsene
}
\email{Theo.Raedschelders@vub.be}
\urladdr{https://www.theoraedschelders.com/}
\author{Greg Stevenson}
\address{Greg Stevenson, Department of Mathematics,
Aarhus University,
Ny Munkegade 118, bldg. 1530
DK-8000 Aarhus C
Denmark
}
\email{greg@math.au.dk}
\urladdr{https://sites.google.com/view/gregstevenson}
\keywords{}
\begin{document}

\begin{abstract}
\noindent 
We show that every proper connective dg algebra $A$ admits a geometric realization (as defined by Orlov) by a smooth projective scheme with a full exceptional collection. If $A$ is moreover smooth, we compute the noncommutative Chow motive of $A$. We go on to analyse the relationship between smoothness and regularity in more detail as well as commenting on smoothness of the degree zero cohomology for smooth proper connective dg algebras.
\end{abstract}

\maketitle

\tableofcontents



\section{Introduction}

By now dg algebras (differential graded algebras, if you're not into the whole brevity thing) form a fundamental part of modern mathematics; they control, and provide convenient models for, the majority of homological information arising from a diverse array of areas ranging from representation theory to symplectic and algebraic geometry. Moreover, one can and should think of dg algebras as giving rise to geometry in their own right: they are derived noncommutative spaces. 

In order to make more precise statements let us fix an algebraically closed field $k$ and let $A$ be a proper dg algebra over $k$ (in other words $A$ has finite dimensional total cohomology over $k$). The easiest such dg algebras to handle are the \emph{connective} ones, i.e.\ those with $\sfH^i(A) = 0$ for $i>0$ or equivalently, up to quasi-isomorphism, $A^i=0$ for $i>0$. This is still a quite general class of dg algebras and is rather `geometric' in the sense that it is equivalent to considering suitable simplicial algebras via the Dold-Kan correspondence.

The simplest class of proper connective dg algebras is given by the finite dimensional algebras, considered as dg algebras concentrated in degree zero. The corresponding derived noncommutative spaces are known to belong to the world of commutative geometry. Indeed, by a remarkable insight of Orlov \cite{MR3545926}, building on work of Auslander, for any finite dimensional algebra $\Lambda$ there is an embedding of $\Perf\Lambda$ into the bounded derived category of coherent sheaves on a smooth projective scheme, which can even be assumed to have a full exceptional collection. Orlov calls such an embedding a \emph{geometric realization} of $\Lambda$.

Proper connective dg algebras behave like finite dimensional algebras in many respects. For instance, their Grothendieck group is free of finite rank and the unbounded derived category has a standard t-structure with heart the category of modules for a finite dimensional algebra. It is thus natural to ask if one can also find the corresponding derived noncommutative schemes lurking in the commutative setting. Our main result is Theorem~\ref{thm:main} which asserts that this is indeed the case, for every proper connective dg algebra $A$ one can construct a geometric realization
\begin{displaymath}
\Perf A \hookrightarrow \sfD^\b(\coh X)
\end{displaymath}
where $X$ is a smooth projective scheme with a full exceptional collection, answering a question of Orlov \cite[Question 4.4]{MR3545926} in the connective case.

This result has a number of strong consequences. For instance, it implies that $A$ is quasi-isomorphic to a dg algebra which is finite dimensional on the nose (Corollary~\ref{cor:fd}). It also provides a conceptual justification for a number of observations concerning additive invariants of smooth proper connective dg algebras: by Corollary~\ref{cor:motive}, for a smooth proper connective dg algebra A, the noncommutative Chow motive of $\Perf A$ is a sum of points and thus all additive invariants are completely determined by the number of simples for $\sfH^0(A)$. A direct proof of this fact, relaxing the assumption that $k$ be algebraically closed, is also given in Theorem~\ref{th:motives}.

The remainder of the paper is essentially devoted to results and cautionary tales on smoothness and regularity for the perfect complexes and bounded derived category of a proper connective dg algebra. In particular, we show that, under reasonable circumstances, smoothness implies regularity for dg algebras over a base of finite global dimension (Proposition~\ref{prop:regular} which extends work of Lunts \cite{lunts2010categorical}), give a criterion for a proper connective dg algebra to be smooth in terms of its simple modules, (Theorem~\ref{thm:criterion}), and discuss the failure of $\sfH^0(A)$ to be smooth.



\section{Preliminaries and conventions}\label{sec:prelims}
Fix a base field $k$ and let $A=(\bigoplus_{i\in \mathbb{Z}} A^i,d)$ denote a dg algebra over $\base$. Note that we work with cohomological grading, so $d$ is of degree $1$. A \textit{dg ideal} $I$ of $A$ is a graded ideal such that $d(I) \subset I$. 

We say $A$ is \textit{proper} if $A$ is perfect as a dg module over $\base$, and that $A$ is \textit{smooth} if $A$ is perfect as a dg module over $A^e=A^{\op} \otimes_{\base} A$. We say $A$ is \textit{finite dimensional} if $A$ is a bounded complex of finite dimensional vector spaces on the nose (often we will abuse this terminology by calling $A$ finite dimensional if it is quasi-isomorphic to such a dga). We note that this is a strictly stronger condition than properness.

The dg algebra $A$ is \textit{connective} if $\mathsf{H}^i(A)=0$ for $i>0$. One can always replace a connective dg algebra by a quasi-isomorphic one which is zero in positive degrees; this is accomplished by noting that the good truncation in non-positive degrees is a subalgebra. 

We will denote by $\sfD(A)$ the derived category of right dg $A$-modules and by $\Perf A$ the (triangulated) category of perfect right dg $A$-modules. The category $\sf{D}(A)$ is compactly generated by $A$. Since $\base$ is a field and $A$ is proper, $\Perf A$ is Krull-Schmidt, i.e.\ every object decomposes into a finite direct sum of objects with local endomorphism rings. The Grothendieck group $K_0(A)$ of $A$ is, by definition, the Grothendieck group of the triangulated category $\Perf A$. 

We will also make use of $A_\infty$-categories and $A_{\infty}$-algebras, since we require a construction from \cite{Maaseik}. We assume that all $A_{\infty}$-notions are strictly unital and refer to loc.\ cit. for unexplained notation and further references. As for dg algebras we will call an $A_\infty$-algebra connective if it is concentrated in non-positive degrees, and we will work with cohomological grading throughout.

Given a finite dimensional algebra $\Lambda$ over $k$ we write $\rad(\Lambda)$ for its Jacobson radical.

\textbf{Standing assumptions:}
in the remainder of this paper we will assume that $A$ is a connective dg algebra over $\base$, such that $A^i=0$ for $i>0$, unless explicitly stated otherwise. All functors are derived and we omit the cumbersome decorations that indicate this. 




\section{Geometric realizations (following Orlov)}\label{sec:georeals}

This section is concerned with the problem of embedding a derived noncommutative scheme into the derived category of a (discrete, commutative) scheme. The set-up we wish to consider is made precise in the following definition, adapted from work of Orlov \cite{Orlov18}*{Definition~2.17} (cf.\ Orlov's earlier work \cites{MR3545926, Orlov15} where this notion is developed for derived noncommutative schemes with full exceptional collections). 

\begin{defn}\label{defn:gr}
A dg algebra $A$ admits a \emph{geometric realization} if there exists a fully faithful functor
\begin{equation}
r\colon \Perf A \to D^b(\coh(X)),
\end{equation}
for some smooth projective scheme $X$, which can be enhanced i.e.\ one can choose appropriate dg models and a dg functor between them such that taking homotopy yields the specified functor $r$.
\end{defn}

\begin{rem}
One could formulate this using any flavour of enhancement that one had a taste for; the essential point is that the embedding should be a truncation of one preserving all higher homotopical structure.
\end{rem}

Our main result is Theorem~\ref{thm:main} which asserts that every proper connective dg algebra admits a geometric realization, and moreover one can choose the smooth projective scheme to have a full exceptional collection. We then deduce a number of strong consequences from this fact.

Our approach is via the machinery of $A_{\infty}$-algebras in the category of chain complexes. This has the advantage that we can work with minimal models of proper dg algebras which are honestly finite dimensional. We will see in Corollary~\ref{cor:fd} that in fact any proper connective dg algebra admits a finite dimensional dg model, see also Remark \ref{rem:fd}.

We first show that for a finite dimensional connective $A_{\infty}$-algebra one can construct a finite analogue of the radical filtration which is compatible with the higher multiplications. We then go on to use this, together with the results of \cite{Maaseik} to deduce geometricity.

 In this section we assume $k$ is algebraically closed.

\subsection{An $A_{\infty}$-version of the radical filtration}

Assume $(\A,m_*)$ is a strictly unital and minimal $A_{\infty}$-algebra (i.e. $m_1=0$). For $n \geq 2$, denote by $\Psi_n \subset \Hom_k(\A^{\otimes n},\A)$ the subset consisting of all $n$-ary operations obtained by successively applying $1^{\otimes r} \otimes m_s \otimes 1^{\otimes t}$ to $\A^{\otimes n}$, and set $\Psi_1:=\{\id_{\A}\}$. One can also think of $\Psi_n$ as the set of planar rooted trees with $n$ leaves. In particular each of the sets $\Psi_n$ is finite. Set $\Psi=\bigcup_{n \geq 1} \Psi_n$ and define two functions
\begin{align}
v&:\Psi \to \NN\\
\vert \cdot \vert&:\Psi \to \NN
\end{align}
as follows: if $\psi \in \Psi$ is constructed out of $m_{i_1}, \ldots, m_{i_k}$, then $v(\psi):=k$ (i.e.\ the number of higher multiplications involved, or equivalently the number of inner vertices of the planar rooted tree corresponding to $\psi$), and $\vert \psi \vert:=\vert \sum_{j=1}^k (2-i_j) \vert$ is the absolute value of the total degree of $\psi$. It is then easy to check that for $\psi \in \Psi_n$ we have
\begin{equation}
\label{eq:form}
n=v(\psi)+|\psi|+1.
\end{equation}

\begin{ex}
For $n=4$, the set $\Psi_4$ consists of 11 elements:
\begin{center}
  \begin{tabular}{ l | r | r }
    $\Psi_4$ & $v$ & $| \cdot |$ \\ \hline
    $m_4(-,-,-,-)$ & 1 & 2 \\
    $m_3(m_2(-,-),-,-)$ & 2 & 1 \\
    $m_3(-,m_2(-,-),-)$ & 2 & 1\\
    $m_3(-,-,m_2(-,-))$ & 2 & 1 \\
    $m_2(m_3(-,-,-),-)$ & 2 & 1 \\
    $m_2(-,m_3(-,-,-))$ & 2 & 1 \\
    $m_2(m_2(m_2(-,-),-),-)$ & 3 & 0 \\
    $m_2(m_2(-,m_2(-,-)),-)$ & 3 & 0 \\
    $m_2(m_2(-,-),m_2(-,-))$ & 3 & 0 \\
    $m_2(-,m_2(m_2(-,-),-))$ & 3 & 0 \\
    $m_2(-,m_2(-,m_2(-,-)))$ & 3 & 0
  \end{tabular}
\end{center}
\end{ex}
Since $\A$ is minimal, $m_2$ defines an associative multiplication on $\A$, and we can consider the corresponding Jacobson radical $J:=\rad \A$. The radical defines a decreasing filtration $F^*:=\{F^p\A\}_{p\geq 0}$ on $\Lambda$, where 
\begin{align*}
F^0\A&= \A\\
F^1\A&=\sum_{n \geq 1} \sum_{\psi \in \Psi_n} \psi(J^{\otimes n})\\
F^2\A&=\sum_{n \geq 2} \sum_{\psi \in \Psi_n} \psi(J^{\otimes n})\\
&\vdots\\
F^{k}\A&=\sum_{n \geq k} \sum_{\psi \in \Psi_n} \psi(J^{\otimes n})\\
&\vdots
\end{align*}
This filtration is not necessarily finite, even if $\A$ is finite-dimensional.
\begin{ex}
Consider the quiver algebra
\begin{equation}
\begin{tikzcd}
1 \ar{rr}{a}& & 2 \ar{dl}{b}\\
& 3 \ar{ul}{c} &
\end{tikzcd}
\end{equation}
with $a$ in degree $1$, $b$ and $c$ in degree $0$, and with square-zero relations. Then we obtain a minimal finite-dimensional $A_{\infty}$-algebra by defining
\begin{align}
m_3(a,b,c)&=e_1 \\ 
m_3(b,c,a)&=e_2 \\ 
m_3(c,a,b)&=e_3
\end{align}
and setting $m_n=0$ for $n>3$. The filtration $F^*$ is infinite. Note that $m_3$ does not preserve $J$. This $A_{\infty}$-algebra is a minimal model for the Ext-algebra of the (sum of the) 3 indecomposable modules for the path algebra of the $A_2$ quiver.
\end{ex}

Even if the higher multiplications preserve $J$, i.e. $m_n(J^{\otimes n}) \subset J$ for all $n$, then $F^*$ is not necessarily finite, see Remark \ref{rem:efi}. The following lemmas are needed to prove the main result in \S\ref{sec:georeal}.

\begin{lem}
\label{lem:filt}
If $(\A,m_*)$ is a minimal finite-dimensional $A_{\infty}$-algebra which is connective (i.e.\ $\A^i=0$ for $i>0$), then:
\begin{enumerate}
\item\label{eq:radpres} $m_n(J^{\otimes n}) \subset J$ for all $n$;
\item \label{eq:1rad}$F^1\A=J$;
\item\label{eq:filtfin} the filtration $F^*$ is finite.
\end{enumerate}  
\end{lem}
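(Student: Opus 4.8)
The plan is to prove the three statements more or less simultaneously, using the connectivity hypothesis together with the degree bookkeeping recorded in \eqref{eq:form}. The key observation is that since $\A$ is connective and finite-dimensional, there is a largest integer $N$ with $\A^{-N} \neq 0$, and moreover every element of $J = \rad\A$ lies in $\A^{\leq 0}$ while $\A^0$ is a finite-dimensional algebra whose radical is nilpotent; so there is an integer $r$ with $(\A^0 \cap J)^r \subseteq \bigoplus_{i<0}\A^i$, and then a further $N$ steps of multiplication by $J$ land in $0$ purely for degree reasons (each multiplication by a strictly negative-degree element drops the degree). This gives an a priori bound $J^M = 0$ in the associative algebra $(\A, m_2)$ for $M = r + N + 1$ or so.

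For part \eqref{eq:radpres}, I would argue that $m_n(J^{\otimes n}) \subseteq \A^{\leq 0}$, and that its image cannot meet the semisimple-up-to-radical part. More precisely, write $\A = S \oplus J$ as graded vector spaces with $S \subseteq \A^0$ a choice of semisimple complement. Decomposing each input of $m_n$ into its $S$- and $J$-components and expanding multilinearly, the only terms that could contribute a component in $S$ are those where enough inputs lie in $S$; but an input in $S$ sits in degree $0$, so for $m_n$ (which has degree $2-n \leq -1$ when $n \geq 3$) to output something in degree $0$ we would need strictly positive-degree inputs, which do not exist — forcing $n = 2$, where the claim is just that $J$ is an ideal. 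The remaining case needing care is when all inputs are in $J$ but $n=2$: handled. So for $n \geq 3$, $m_n(J^{\otimes n})$ has image in $\bigoplus_{i \leq -1}\A^i \subseteq J$, and for $n=2$ it is the ordinary radical, giving \eqref{eq:radpres}.

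Part \eqref{eq:1rad}: the inclusion $F^1\A \subseteq J$ follows from \eqref{eq:radpres} applied repeatedly to any tree $\psi$ with all leaves in $J$ (induct on the number of inner vertices, peeling off a leaf-adjacent vertex). Conversely $J = m_2(\unit \otimes J) \subseteq \psi(J^{\otimes 2})$ for $\psi$ the single $m_2$-corolla with the unit plugged into one slot (using strict unitality), so $J \subseteq F^1\A$. Finally, for part \eqref{eq:filtfin}, I expect the main obstacle to lie: one must show $F^k\A = 0$ for $k \gg 0$, and the issue is that a tree $\psi \in \Psi_n$ with many leaves but few inner vertices (e.g.\ a single $m_n$) gives $v(\psi)$ small even though $n$ is large, so one cannot directly bound degrees by $v$. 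The way out is to use \eqref{eq:form}: if $\psi \in \Psi_n$ then either $|\psi|$ is large — in which case $\psi$ has large total degree, hence $\psi(J^{\otimes n}) \subseteq \A^{-|\psi|}$ vanishes once $|\psi| > N$ — or $|\psi|$ is bounded, say $|\psi| \leq N$, forcing $v(\psi) = n - |\psi| - 1 \geq n - N - 1$; so a tree with many leaves and bounded degree has many inner vertices, and one then shows by a careful induction (tracking how many applications of $m_2$ between radical elements have occurred, versus how many higher $m_n$'s, each of the latter strictly lowering the attainable cohomological degree) that $\psi(J^{\otimes n}) = 0$ once $n$ exceeds a bound depending only on $N$ and the nilpotency degree of $\rad\A^0$. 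Combining the two cases bounds $k$ uniformly, so $F^*$ is finite. I would isolate the combinatorial heart of this last induction as the technical crux of the proof.
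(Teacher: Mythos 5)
Your argument follows essentially the same route as the paper's. Part (1) is the same degree count: using $J=\rad(\A^0)\oplus\A^{<0}$, for $n\geq 3$ the image of $m_n$ on $J^{\otimes n}$ lands in $\A^{\leq 2-n}\subseteq \A^{<0}\subseteq J$, while $n=2$ is the usual ideal property. For part (3), the ``combinatorial crux'' you defer is exactly the step the paper carries out: every higher multiplication strictly drops the degree, so a nonvanishing tree contains at most $n_0$ of them (where $\A^{<-n_0}=0$), and between consecutive such drops one can insert only boundedly many $m_2$'s before landing in a vanishing power of $J$; this bounds $v(\psi)$ uniformly for nonvanishing $\psi$, and \eqref{eq:form} then bounds $n$, just as in your two-case split on $|\psi|$.

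Two small corrections to the writeup. In part (1), the decomposition of the inputs into $S$- and $J$-components is a red herring: all inputs already lie in $J$, and the only question is whether the \emph{output} can have an $S$-component, which the degree argument rules out for $n\geq 3$. In part (2), your justification of $J\subseteq F^1\A$ via strict unitality does not work as stated, since $\unit\notin J$ and so $m_2(\unit\otimes J)$ is not of the form $\psi(J^{\otimes 2})$ with all tensor factors in $J$; the inclusion is instead immediate from the $n=1$ term of the defining sum, because $\Psi_1=\{\id_\A\}$ and $\id_\A(J)=J$.
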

\begin{proof}
The assumptions on $\A$ imply that
\begin{equation}
\label{desc-rad}
J=\rad(\A^0) \oplus \A^{<0},
\end{equation}
where $\A^{<0}=\bigoplus_{i<0}\A^i$. 

To show \eqref{eq:radpres} and \eqref{eq:1rad}, it suffices to note that the inclusion $m_2(J^{\otimes 2}) \subset J$ is satisfied for every associative algebra, and if $n>2$ then for degree reasons it follows from \eqref{desc-rad} that $m_n(J^{\otimes n}) \subset J$.

As for \eqref{eq:filtfin}, it suffices to show that $\psi(J^{\otimes n})=0$ for every $\psi \in \Psi_n$ with $n \gg 0$. To prove this, first note there exists an  $n_0$ such that $\A^i=0$ for $i<-n_0$, and let $l(\A)$ denote the radical length of $(\A,m_2)$, so $J^{i}=0$ for $i > l(\A)$. We claim that choosing 
\begin{equation}
\label{eq:bound-vertices}
n_1> n_0(l(\A)-1)+(l(\A)-2)
\end{equation}
ensures that $v(\psi) \leq n_1$ for all $\psi \in \Psi_n$ satisfying $\psi(J^{\otimes n})\neq 0$ (in particular, $n_1$ is independent of $n$). Indeed, since $\A^i=0$ for $i>0$, for any $\psi \in \Psi$ the maximal degree of an element in the domain of $\psi$ is $0$, and the minimal degree of an element in the codomain is $-n_0$. The $\psi$ with maximal possible $v(\psi)$ such that $\psi(J^{\otimes n})\neq 0$ is thus obtained by:
\begin{enumerate}
\item applying $m_2$ at most $l(\A)-1$ times,
\item applying $m_3$ to lower the degree by $-1$,
\item iterate.
\end{enumerate}
This process can only be iterated until we reach degree $-n_0$, which explains \eqref{eq:bound-vertices}.

For degree reasons, we can choose an $n_2$ such that $m_n=0$ for $n > n_2$ (e.g.\ $n_0+3$). To finish the proof, we now choose $n > n_1+\vert n_1(2-n_2) \vert+1$. For $0 \neq \psi \in \Psi_n$ we then have by formula \eqref{eq:form}
\begin{equation}
v(\psi)+|\psi| > n_1+\vert n_1(2-n_2) \vert,
\end{equation}
so either $v(\psi)>n_1$ or an $m_\ell$ with $\ell>n_2$ occurs; in either case this forces $\psi(J^{\otimes n})=0$.
\end{proof}

\begin{rem}
\label{rem:efi}
Finiteness of the filtration, as demonstrated in Lemma~\ref{lem:filt}, seems to be quite subtle: in \cite[Theorem 5.4]{efimov}, a minimal finite-dimensional $A_{\infty}$-algebra $(B,m_*)$ is constructed satisfying $m_n(J^{\otimes n}) \subset J$, but with elements $a,b,c \in J$ such that $m_3(a,b,c)=b$. In particular, the filtration $F^*$ is not finite. Of course, $B$ is not connective.
\end{rem}

We will say that a minimal finite-dimensional $A_{\infty}$-algebra $(\A,m_*)$ is basic if the associative algebra $(\A,m_2)$ is basic, i.e. $\A/J \cong k \times \cdots \times k$. In particular, there exists a subalgebra $S \subset \A$ spanned by primitive orthogonal idempotents such that $\A=S \oplus J$. By considering $\A$ as an $A_{\infty}$-category with one object for every such idempotent, we can hence assume that 
\begin{equation}
\label{eq:sssub}
m_n(\A,\ldots,\A,S,\A,\ldots,\A)=0,
\end{equation}
for $n>2$ (see for instance \cite{Seidel08}*{Lemma~2.1}).

\begin{lem}
\label{lem:filt2}
If $\A$ is a minimal basic connective finite-dimensional $A_{\infty}$-algebra, then the compatibility condition
\begin{equation}
\label{eq:compatibility}
m_v(F^{i_1} , \ldots , F^{i_v}) \subset F^{i_1+\cdots+i_v},
\end{equation}
is satisfied for all $v\geq 1$ and $i_1,\ldots,i_v$. 
\end{lem}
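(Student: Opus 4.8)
The plan is to prove the compatibility condition \eqref{eq:compatibility} by induction, exploiting the defining property of the filtration $F^*$ together with the structural decomposition \eqref{desc-rad} and the idempotent-compatibility \eqref{eq:sssub}. First I would reduce to the case where each $F^{i_j}$ is replaced by a single generating expression: by definition $F^{i_j}\A = \sum_{n\geq i_j}\sum_{\psi\in\Psi_n}\psi(J^{\otimes n})$, so a general element of $F^{i_j}$ is a $k$-linear combination of elements of the form $\psi_j(x_1,\ldots,x_{n_j})$ with $\psi_j\in\Psi_{n_j}$, $n_j\geq i_j$, and all $x_s\in J$. By multilinearity of $m_v$ it suffices to show that $m_v(\psi_1(\underline x_1),\ldots,\psi_v(\underline x_v))$ lies in $F^{i_1+\cdots+i_v}$ for such generators. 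Now the key observation is that the composite $m_v\circ(\psi_1\otimes\cdots\otimes\psi_v)$ is \emph{itself} an element of $\Psi_{n_1+\cdots+n_v}$: it is obtained by grafting the trees $\psi_1,\ldots,\psi_v$ onto the leaves of the corolla $m_v$, i.e.\ by successively applying operations of the form $1^{\otimes r}\otimes m_s\otimes 1^{\otimes t}$. (For $v=1$ this uses $\Psi_1 = \{\id\}$, so the statement is vacuous; for $v\geq 2$ the root vertex $m_v$ together with the subtrees gives a legitimate planar rooted tree.) Hence, writing $\Phi := m_v\circ(\psi_1\otimes\cdots\otimes\psi_v)\in\Psi_N$ with $N = n_1+\cdots+n_v \geq i_1+\cdots+i_v$, we get
\[
m_v(\psi_1(\underline x_1),\ldots,\psi_v(\underline x_v)) = \Phi(x_1,\ldots,x_N) \in \sum_{\psi\in\Psi_N}\psi(J^{\otimes N}) \subset F^{i_1+\cdots+i_v}\A,
\]
since $F^{p}\A = \sum_{n\geq p}\sum_{\psi\in\Psi_n}\psi(J^{\otimes n})$ and $N\geq i_1+\cdots+i_v$.

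The one genuine subtlety is whether $\Phi$ really belongs to $\Psi$, given the strictly unital conventions and the normalization \eqref{eq:sssub}. I would address this by checking the two degenerate possibilities. The first is when some $m_s$ appearing in the grafted tree has $s>2$ but one of its inputs comes from $S$ rather than $J$: this cannot happen, because in our generators all leaves are decorated by elements of $J$, and \eqref{desc-rad} together with the degree bound shows that the intermediate outputs of the subtrees $\psi_j$ also land in $J$ (this is exactly Lemma~\ref{lem:filt}\eqref{eq:radpres}), so no $S$-entry is ever fed into a higher multiplication; the operation $\Phi$ is therefore a bona fide element of $\Psi_N$ and not forced to vanish by \eqref{eq:sssub}. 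The second is when $v\geq 2$ but the root $m_v$ is actually $m_1 = 0$ or $m_2$ applied with a unit — the former is excluded by minimality, and if $m_v$ with $v\geq 3$ has a unit input we again are outside the generating set by the reduction above, while for $v=2$ a unit input would take us back to a $\psi_j$ which is already accounted for. I expect this bookkeeping — confirming that grafting stays inside $\Psi$ and that no normalization collapses the relevant operation to zero — to be the main (though not deep) obstacle; everything else is the formal unwinding of definitions.

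Finally, I would remark that the content of the lemma is precisely that $F^*$ is a \emph{multiplicative} filtration with respect to all the higher operations, not just $m_2$; the associated graded $\gr_{F^*}\A$ thereby inherits the structure of a minimal $A_\infty$-algebra, which is the form in which this statement will be used in \S\ref{sec:georeal}. Since by Lemma~\ref{lem:filt}\eqref{eq:filtfin} the filtration is finite, this associated graded is again finite-dimensional, and the higher products on it are induced from those on $\A$ by passing to subquotients.
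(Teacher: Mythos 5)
Your argument for the base case --- all $i_j\geq 1$ --- is correct and is exactly the paper's: grafting $\psi_1,\ldots,\psi_v$ onto the corolla $m_v$ produces an element of $\Psi_{n_1+\cdots+n_v}$, and $n_1+\cdots+n_v\geq i_1+\cdots+i_v$ does the rest. The gap is that your opening reduction is false when some $i_j=0$. You write that every element of $F^{i_j}$ is a combination of $\psi_j(x_1,\ldots,x_{n_j})$ with all $x_s\in J$; but $F^0\A=\A$ by definition (it is \emph{not} $\sum_{n}\sum_{\psi}\psi(J^{\otimes n})$, which by Lemma~\ref{lem:filt}\eqref{eq:1rad} equals $F^1=J$), so the semisimple part $S$ in the decomposition $\A=S\oplus J$ is missed entirely. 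Statements such as $m_2(\A,F^i)\subset F^i$ and $m_v(\A,\ldots,F^i,\ldots,\A)\subset F^i$ are precisely the content of \eqref{eq:compatibility} with zero indices, and they are the part of the lemma that is actually used downstream; your proof never reaches them. Your paragraph on ``degenerate possibilities'' discusses strict units, which is not the issue: the problematic inputs are arbitrary idempotents of $S$, not the unit.

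Closing this gap is where the hypotheses ``basic'' and the normalization \eqref{eq:sssub} enter, and it is most of the paper's proof. The paper inducts on the number of indices equal to $0$: if $i_j=0$ one splits $F^0=\A=S\oplus J$; for $v\geq 3$ the normalization \eqref{eq:sssub} kills $m_v(\ldots,S,\ldots)$ outright, so $F^0$ may be replaced by $J=F^1$ in that slot and the inductive hypothesis applies (note $F^1\subseteq F^0$ costs nothing in the target exponent); for $v=2$ one must additionally check $m_2(F^i,S)\subset F^i$, which holds because multiplication by a primitive idempotent is projection onto a direct summand compatible with the filtration. You should add this induction (or an equivalent treatment of $S$-inputs); without it the lemma is only proved for strictly positive indices.
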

\begin{proof}
For fixed $i_1,\ldots,i_v$ we induct on the number of $i_j$ which are equal to $0$. In the base case, i.e.\ when all the $i_j\geq 1$, then 
\begin{eqnarray}
m_v(F^{i_1} , \ldots , F^{i_v})
&=& m_v\bigg(\sum_{n_1 \geq i_1} \sum_{\psi_1 \in \Psi_{n_1}} \psi_1(J^{\otimes n_1}), \ldots, \sum_{n_v \geq i_v} \sum_{\psi_v \in \Psi_{n_v}} \psi_v(J^{\otimes n_v})\bigg) \\
&=& \sum_{\substack{n_1,\ldots,n_v \\ n_j \geq i_j}} \sum_{\substack{\psi_1,\ldots,\psi_v \\ \psi_j \in \Psi_{n_j}}}  m_v(\psi_1(J^{\otimes n_1}),\ldots, \psi_v(J^{\otimes n_v})) \\
&=& \bigg(\sum_{\substack{n_1,\ldots,n_v \\ n_j \geq i_j}} \sum_{\substack{\psi_1,\ldots,\psi_v \\ \psi_j \in \Psi_{n_j}}}  m_v(\psi_1,\ldots, \psi_v)\bigg)(J^{\otimes n_1 + \cdots + n_v})\\
&\subset & F^{i_1 + \cdots + i_v}.
\end{eqnarray}
We split the induction into two cases. Suppose first that $v\geq 3$ and the compatibility statement holds when at most $k-1$ indices are $0$. Given $v\geq k$, and $i_1,\ldots,i_v$ with $k$ of them $0$, we note that, if $i_j=0$, then 
\begin{eqnarray}
m_v(F^{i_1} , \ldots , F^{i_v}) &=&
m_v(F^{i_1} , \ldots , F^{i_{j-1}} , \A , F^{j+1} , \ldots , F^{i_v}) \\
&=&m_v(F^{i_1} , \ldots , F^{i_{j-1}} , S , F^{j+1} , \ldots , F^{i_v})\\
&+&m_v(F^{i_1} , \ldots , F^{i_{j-1}} , J , F^{j+1} , \ldots , F^{i_v}) \\
&=&m_v(F^{i_1} , \ldots , F^{i_{j-1}} , J , F^{j+1} , \ldots , F^{i_v}) \\
&=&m_v(F^{i_1} , \ldots , F^{i_{j-1}} , F^1 , F^{j+1} , \ldots , F^{i_v}),
\end{eqnarray}
where the non-trivial identifications follow from our identification $\A = S\oplus J$, \eqref{eq:sssub}, and Lemma~\ref{lem:filt}\eqref{eq:1rad} respectively. By the induction hypothesis this lies in 
\begin{displaymath}
F^{i_1+\cdots+i_{j-1}+1+i_{j+1}+\cdots +i_v} \subseteq F^{i_1+\cdots+i_{j-1}+i_j+i_{j+1}+\cdots +i_v}.
\end{displaymath}

In the case $v=2$ it suffices to note that 
\begin{equation}
m_2(F^i , S) + m_2(F^i , J) \subset F^i.
\end{equation}
The first term lies in $F^i$ as the higher multiplications can be assumed to respect the decomposition into indecomposable summands, and the second term lies in $F^i$ by the base case for the induction.
\end{proof}

\subsection{Existence of realizations}
\label{sec:georeal}
In this section we show that every proper connective dg algebra admits a geometric realization in the sense of Orlov \cite{MR3545926}. We will continue to use $A_\infty$-structures, so let us phrase the notion of geometric realization in this setting.

\begin{defn}
\label{def:non-FM}
Let $\aa$ and $\bb$ be pretriangulated $A_\infty$-categories  \cite{BLM}  and put $\Ascr=H^0(\aa)$ and $\Bscr=H^0(\bb)$.
We say that an exact
functor $F\colon \Ascr\r \Bscr$ is  \emph{Fourier-Mukai} if
there is an $A_\infty$-functor $f\colon \aa\r \bb$ such that
$F\cong H^0(f)$ as graded functors. 
\end{defn}

In this language Definition~\ref{defn:gr} has the following reformulation.

\begin{defn*}
A dg algebra $A$ admits a \emph{geometric realization} if there exists a fully faithful Fourier-Mukai functor
\begin{equation}
\Perf A \to D^b(\coh(X)),
\end{equation}
for some smooth projective scheme $X$.
\end{defn*}

Let $A$ denote a proper connective dg algebra over $k$. 

\begin{lem}
\label{lem:basic}
Every proper connective dg algebra $A$ is derived Morita equivalent to a proper connective dg algebra $A'$ such that $\sfH^0(A')$ is basic.
\end{lem}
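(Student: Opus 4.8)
The plan is to reduce, as usual for finite-dimensional algebras, to the basic case by passing to a progenerator built from one indecomposable projective per isomorphism class, and then to lift this passage from the degree-zero cohomology to the DG-algebra itself via idempotents.

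First I would analyse $\Lambda:=\sfH^0(A)$. Since $A$ is proper and connective, $\Lambda$ is a finite-dimensional $k$-algebra, so it has finitely many isomorphism classes of indecomposable projective modules $P_1,\dots,P_r$. Choose a basic progenerator $Q=\bigoplus_{i=1}^r P_i$ of $\modu\Lambda$ and set $\Gamma:=\End_\Lambda(Q)$, a basic finite-dimensional algebra Morita equivalent to $\Lambda$. The multiplicity module $Q$ corresponds to an idempotent $e\in M_N(\Lambda)=\End_\Lambda(\Lambda^{\oplus N})$ for suitable $N$, with $\Gamma\cong eM_N(\Lambda)e$.

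Next I would realize this idempotent at the DG level. Because $A^i=0$ for $i>0$, the canonical map $A^0\to\sfH^0(A)=\Lambda$ is a surjection of ordinary rings whose kernel is a nilpotent ideal (it is the image of $d\colon A^{-1}\to A^0$, which is nilpotent since $A$, being proper, has finite-dimensional cohomology and one checks the kernel consists of nilpotents — more robustly, idempotents lift along nilpotent extensions, and even along the quasi-isomorphism $A\to\Lambda$ one can lift idempotents). Hence the idempotent $e\in M_N(\Lambda)$ lifts to an idempotent $\tilde e\in M_N(A^0)\subset M_N(A)$, which is automatically a degree-zero cocycle, so it defines an idempotent endomorphism of the DG-module $A^{\oplus N}$ over $A$. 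Let $A':=\tilde e\, M_N(A)\,\tilde e=\End_A(\tilde e A^{\oplus N})$, a DG-algebra. Then $A'$ is again connective (it sits inside $M_N(A)$ which is connective), it is proper (it is a direct summand, as a DG-$k$-module, of the proper DG-algebra $M_N(A)$), and the DG-module $\tilde e A^{\oplus N}$ is a summand of $A^{\oplus N}$ in $\Perf A$ which generates, so it induces a derived Morita equivalence $\sfD(A)\simeq\sfD(A')$. Finally $\sfH^0(A')=\sfH^0(\tilde e M_N(A)\tilde e)=e\,\sfH^0(M_N(A))\,e=e M_N(\Lambda)e\cong\Gamma$ is basic, using that taking cohomology commutes with the (split) idempotent truncation.

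The main obstacle I expect is the bookkeeping around lifting the idempotent through the non-formal passage from $A$ to $\sfH^0(A)$ and checking that $\sfH^0$ of the corner DG-algebra is the corresponding corner of $\Lambda$; once one observes that multiplication by the cocycle $\tilde e$ is a chain map and that $\tilde e M_N(A)\tilde e$ is a direct summand of $M_N(A)$ as a complex, everything becomes formal. One should also record that $\tilde e A^{\oplus N}$ is a compact generator of $\sfD(A)$ — it is compact since it is perfect, and it generates because $A^{\oplus N}$ does and $A^{\oplus N}$ is a finite extension of copies of $\tilde e A^{\oplus N}$ and $(1-\tilde e)A^{\oplus N}$, with the latter also built from the former after Morita equivalence — so Keller's Morita theorem applies and gives $\Perf A\simeq\Perf A'$ compatibly with enhancements.
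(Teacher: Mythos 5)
Your overall strategy --- choose a basic progenerator $Q$ of $\Lambda=\sfH^0(A)$ and transport it back to the DG level --- is in essence the same mechanism as the paper's, which likewise rests on the equivalence $\add(A)\xrightarrow{\sim}\proj\sfH^0(A)$ (quoted there from \cite{HA}*{Corollary~7.2.2.19}). However, one step as written would fail. The kernel of $A^0\to\sfH^0(A)$ is $\im(d\colon A^{-1}\to A^0)$, and this ideal is in general neither nilpotent nor even nil: properness only bounds the \emph{cohomology} of $A$, not $A^0$ itself, which may be infinite dimensional. For instance, for $A=k\langle x,y\rangle$ with $|x|=0$, $|y|=-1$, $d(y)=x$ (a proper connective DG-algebra, quasi-isomorphic to $k$), one has $A^0=k[x]$ and $\im(d)=(x)$, which contains no nonzero nilpotents. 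Likewise $A\to\sfH^0(A)$ is not a quasi-isomorphism, so neither of your two justifications for producing a \emph{strict} idempotent cocycle $\tilde e\in M_N(A^0)$ lifting $e$ goes through; and the identification $\sfH^0(\tilde e M_N(A)\tilde e)=eM_N(\Lambda)e$, hence the basicness of $\sfH^0(A')$, depends on having such a strict lift.

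The repair is standard and lands you essentially on the paper's proof: rather than lifting $e$ to an element, split the corresponding homotopy idempotent of $A^{\oplus N}$ in $\Perf A$, which is idempotent complete --- indeed Krull--Schmidt, since $A$ is proper --- to obtain a summand $M$ with $\sfH^0(M)\cong Q$, and set $A':=\mathrm{R}\End_A(M)$. Then $\sfH^i(A')=\Hom_{\sfD(A)}(M,\Sigma^iM)$ is a summand of $M_N(\sfH^i(A))$, giving connectivity and properness; $M$ generates because $Q$ does, giving the derived Morita equivalence; and $\sfH^0(A')\cong\End_\Lambda(Q)$ is basic by the equivalence $\add(A)\simeq\proj\sfH^0(A)$. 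The paper phrases this by decomposing $A$ itself into pairwise non-isomorphic indecomposables $P_1,\dots,P_n$ via Krull--Schmidt and taking $\mathrm{R}\End$ of $P_1\oplus\cdots\oplus P_n$, which sidesteps idempotent lifting altogether; the mathematical content is the same.
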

\begin{proof}
Since $A$ is proper, $\Perf A$ is Krull-Schmidt, so $A\cong \oplus_{i=1}^n P_i^{\oplus m_i} \in \Perf A$, with $P_1,\ldots,P_n$ non-isomorphic indecomposable dg modules. Since $A$ generates $\Perf A$, so does $P:=P_1 \oplus \cdots \oplus P_n$ and hence $\Perf A \cong \Perf A'$ with $A':=\mathrm{R}\End_{\Perf A}(P)$. It hence suffices to show that $\sfH^0(A')$ is basic. This follows from the fact that the functor $\sfH^0$ induces an equivalence
\begin{equation}
\sfH^0=\Hom_{\Perf A}(A,-)\colon \add(A) \xrightarrow{\sim} \proj \sfH^0(A),
\end{equation}
which follows for instance from \cite{HA}*{Corollary~7.2.2.19}.
\end{proof}

Since we are working over a field, the theory of minimal models ensures there exists a minimal $A_{\infty}$-structure on the finite-dimensional vector space $\A:=\sfH^*(A')$ such that the resulting $A_{\infty}$-algebra $(\A,m_*)$ satisfies
\begin{equation}
\label{eq:eq-ainf}
\Perf A \cong \Perf A' \cong \Perf \A,
\end{equation}
and these equivalences are Fourier-Mukai. In particular, $\A$ is a minimal basic connective finite dimensional $A_{\infty}$-algebra.

\begin{thm}
\label{thm:main}
Every proper connective dg algebra $A$ admits a geometric realization
\begin{equation}
\Perf A \to D^b(\coh(X)),
\end{equation}
for some smooth projective scheme $X$ with a full exceptional collection.
\end{thm}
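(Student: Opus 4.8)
The plan is to reduce, via Lemma~\ref{lem:basic} and the minimal model $(\A, m_*)$ with $\Perf A \cong \Perf \A$ as in \eqref{eq:eq-ainf}, to the problem of geometrically realizing a minimal basic connective finite-dimensional $A_\infty$-algebra. For such an $\A$ we now have, by Lemma~\ref{lem:filt2}, a \emph{finite} filtration $F^*$ by $A_\infty$-ideals (in the appropriate sense) compatible with all the higher multiplications $m_v$. This is precisely the input needed to run the $A_\infty$-enhanced version of Orlov's construction from \cite{Maaseik}: the finite compatible filtration plays the role of the radical filtration in Orlov's original argument for finite-dimensional algebras \cite{MR3545926}, allowing one to build an associated graded/triangular structure and then a "resolution" of $\A$ by a DG-algebra (or $A_\infty$-algebra) with a full exceptional collection, which in turn is realized inside $D^b(\coh X)$ for a suitable smooth projective $X$ (an iterated projective bundle / blow-up type construction, with $X$ inheriting a full exceptional collection from the construction).

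The key steps, in order, would be: (1) apply Lemma~\ref{lem:basic} to replace $A$ by a derived-Morita-equivalent proper connective DG-algebra $A'$ with $\sfH^0(A')$ basic; (2) pass to the minimal model $\A = \sfH^*(A')$, a minimal basic connective finite-dimensional $A_\infty$-algebra, with Fourier-Mukai equivalence $\Perf A \cong \Perf \A$; (3) invoke Lemma~\ref{lem:filt2} to obtain the finite filtration $F^\bullet \A$ compatible with all $m_v$; (4) feed this data into the main construction of \cite{Maaseik}, which takes a finite-dimensional $A_\infty$-algebra equipped with such a finite compatible filtration and produces a fully faithful Fourier-Mukai embedding $\Perf \A \to D^b(\coh X)$ with $X$ smooth projective carrying a full exceptional collection; (5) compose with the Fourier-Mukai equivalences of step (2) — noting that compositions of Fourier-Mukai functors are Fourier-Mukai (Definition~\ref{def:non-FM}) — to conclude $\Perf A$ geometrically realizes.

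The main obstacle is step (4): verifying that the filtration supplied by Lemma~\ref{lem:filt2} satisfies exactly the hypotheses required by the realization machinery of \cite{Maaseik}, and that that machinery genuinely outputs a \emph{Fourier-Mukai} (enhanceable) embedding rather than merely an exact functor on homotopy categories. In Orlov's finite-dimensional setting one uses the honest radical filtration and Auslander-type constructions; here the subtlety is that $m_n$ for $n>2$ need not preserve $J$ on the nose (as the quiver example preceding Lemma~\ref{lem:filt} shows) and the correct substitute is the coarser filtration $F^\bullet$, so one must check the higher multiplications interact with $F^\bullet$ in the way \cite{Maaseik} demands — this is precisely what Lemma~\ref{lem:filt2} is engineered to guarantee, so the work is in matching conventions rather than proving something new.
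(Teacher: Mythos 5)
Your proposal follows essentially the same route as the paper: reduce via Lemma~\ref{lem:basic} and the minimal model to a minimal basic connective finite-dimensional $A_\infty$-algebra, use the finite compatible filtration (finiteness is Lemma~\ref{lem:filt}\eqref{eq:filtfin}, compatibility is Lemma~\ref{lem:filt2}) to build the Auslander $A_\infty$-category of \cite{Maaseik}, which carries a full exceptional collection, and then apply Orlov's gluing theorem to land in $D^b(\coh(X))$. The only point to make explicit in your step (4) is that \cite{Maaseik} supplies the fully faithful Fourier--Mukai functor $\Perf\A\to\Perf\mathsf{\Gamma}$ together with the semi-orthogonal decomposition of $\Perf\mathsf{\Gamma}$ into copies of $\Perf S$, and the passage to a smooth projective scheme is a separate application of \cite[Theorem~5.8]{MR3545926}.
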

\begin{proof}
By the discussion before the theorem, Lemmas \ref{lem:filt} and \ref{lem:filt2} apply to $\A$. This allows us to construct the corresponding Auslander $A_{\infty}$-category $\mathsf{\Gamma}=\mathsf{\Gamma}_{\A,F^*}$ defined in \cite[\S3]{Maaseik}. By applying Corollary 3.2 and Proposition 3.3 in loc.\ cit., there is a fully faithful Fourier-Mukai functor 
\begin{equation}
\label{eq:emb-aus}
\mathsf{\Gamma} \overset{\infty}{\otimes}_{\A} -\colon \Perf \A \to \Perf \mathsf{\Gamma},
\end{equation}
and $\mathsf{\Gamma}$ admits a (finite) semi-orthogonal decomposition
\begin{align}
\Perf \mathsf{\Gamma}&=\langle \Perf \A/F^1 ,\ldots,\Perf \A/F^1 \rangle \\
&=\langle \Perf S,\ldots,\Perf S \rangle
\end{align}
so in particular $\Perf \mathsf{\Gamma}$ admits a full exceptional collection. Applying \cite[Theorem 5.8]{MR3545926}, there exists a fully faithful Fourier-Mukai functor 
\begin{equation}
\label{eq:emb-scheme}
\Perf \mathsf{\Gamma} \to D^b(\coh(X)),
\end{equation}
for some smooth projective scheme $X$ with a full exceptional collection. It now suffices to consider the composition of \eqref{eq:emb-aus} and \eqref{eq:emb-scheme} with the equivalence \eqref{eq:eq-ainf} to finish the proof of the theorem.
\end{proof}

\begin{rem}
See Proposition~\ref{thm:geometric} for an alternative argument in the special case that $\sfH^0(A)$ is smooth.
\end{rem}


It follows that every proper connective dg algebra $A$ is derived Morita equivalent to a finite dimensional one. In fact, one can even deduce that $A$ is quasi-isomorphic to a finite dimensional dg algebra. We are grateful to Dmitri Orlov for pointing out to us that this second stronger conclusion follows from his work.

\begin{cor}
\label{cor:fd}
Every proper connective dg algebra $A$ is quasi-isomorphic to a finite-dimensional dg algebra.
\end{cor}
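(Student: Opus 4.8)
The plan is to leverage Theorem~\ref{thm:main} together with a strictification input from Orlov's work. The point is that the notion of geometric realization used above is enhanced, hence it remembers algebras and not just categories: if $r\colon\Perf A\to D^b(\coh X)$ is a fully faithful Fourier--Mukai functor and $G\in D^b(\coh X)$ denotes the image of the free rank-one module $A$, then any DG- or $A_\infty$-model of $r$ induces a quasi-isomorphism of $A_\infty$-algebras $A\simeq\End_A(A)\xrightarrow{\sim}\mathbf{R}\End_{D^b(\coh X)}(G)$. Since an $A_\infty$-quasi-isomorphism between two DG-algebras is a zigzag of honest quasi-isomorphisms (Lef\`{e}vre-Hasegawa), it suffices to exhibit \emph{some} DG-model of $\mathbf{R}\End_{D^b(\coh X)}(G)$ that is strictly finite-dimensional.

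To produce one, I would first invoke that $X$ is smooth projective with a full exceptional collection to replace $D^b(\coh X)$, with its standard enhancement, by $\Perf E$ for a \emph{strictly} finite-dimensional DG-algebra $E$: a priori the exceptional collection only supplies a finite-dimensional minimal $A_\infty$-model, and the passage to an honestly finite-dimensional DG-algebra is the delicate ingredient borrowed from Orlov. Granting this, $G$ becomes a perfect DG $E$-module, and I claim every object of $\Perf E$ is quasi-isomorphic to a genuine DG-module direct summand of a genuine finite semi-free DG $E$-module: the class of such objects contains $E$, is visibly closed under shifts and summands, and is closed under cones since, extending a given morphism to a map between the ambient finite semi-free modules by zero, its cone is a direct summand of the (finite semi-free) cone of that extension. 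Pick such an $F$ with $G$ a summand. As $F$ is a finite iterated extension of shifts of the finite-dimensional $E$, it is finite-dimensional over $\base$, so $\End_E(F)$ is a finite-dimensional DG-algebra; the summand $G$ is cut out by a degree-zero idempotent cycle $e\in\End_E(F)$, whence $\mathbf{R}\End_E(G)\cong e\,\End_E(F)\,e$ is finite-dimensional. Combined with the first paragraph, $A$ is quasi-isomorphic to the finite-dimensional DG-algebra $e\,\End_E(F)\,e$.

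The main obstacle is the one external ingredient: that $D^b(\coh X)$, for $X$ smooth projective with a full exceptional collection, admits a model which is finite-dimensional \emph{on the nose} rather than merely proper. This lies right on the boundary between what has been proved so far and what is imported, and it is the reason the sharper conclusion is attributed to Orlov; it is precisely the failure of naive strictification (bar--cobar, say) to preserve finite-dimensionality that makes it non-trivial. Everything else is formal — in particular the retract-splitting step above realizes the relevant homotopy idempotent strictly, with no infinite telescope that could reintroduce infinite-dimensionality, and the identification $A\simeq\mathbf{R}\End_{D^b(\coh X)}(G)$ is routine DG-categorical bookkeeping.
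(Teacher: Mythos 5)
Your overall route is the paper's: compose the geometric realization of Theorem~\ref{thm:main} with Orlov's strictification results so as to identify $A$, up to a zigzag of quasi-isomorphisms, with the endomorphism DG-algebra of a perfect module over an honestly finite-dimensional DG-algebra, and then argue that such endomorphism algebras admit finite-dimensional models. The paper simply cites \cite[Corollary~2.20]{orlov2019finite} for the derived Morita equivalence of $A$ with a finite-dimensional DG-algebra $B$ (your ``finite-dimensional model $E$'' input is the morally identical citation) and \cite[Proposition~2.5]{orlov2019finite} for the statement that the endomorphism DG-algebra of an object of $\Perf B$ is quasi-isomorphic to a finite-dimensional one; your second paragraph is an attempted proof of the latter.

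The gap is in that attempted proof, at the words ``visibly closed under \ldots summands''. Closure under shifts and cones is fine (the extension-by-zero argument works), but closure under derived-category direct summands is exactly the non-trivial point: a summand of $G\simeq eF$ in $\sfD(E)$ is cut out by an idempotent in $\sfH^0(\End_E(F))$, i.e.\ a \emph{homotopy} idempotent, and your construction needs it to be realized by a degree-zero \emph{cycle} $e'$ with $(e')^2=e'$ on the nose. You assert at the end that this strictification happens ``with no infinite telescope'' but give no argument, and for a general DG-algebra it is simply not true that homotopy idempotents on a finite semi-free module split without the telescope. It is true here, and the fix is short: since $E$, hence $F$, is finite-dimensional over $k$, so is $\End_E(F)\subseteq\End_k(F)$; therefore $Z^0(\End_E(F))\to\sfH^0(\End_E(F))$ is a surjection of finite-dimensional $k$-algebras, along which idempotents lift (pass to the commutative subalgebra generated by any preimage and decompose it into local Artinian factors). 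The resulting strict idempotent splits $F$ as a DG-module, and its image is quasi-isomorphic to the given summand by uniqueness of idempotent splittings. With that inserted your argument closes up --- but be aware that this is precisely the content of the proposition the paper cites rather than reproves.
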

\begin{proof}
By combining Theorem \ref{thm:main} with \cite[Corollary 2.20]{orlov2019finite} we see that any proper connective dg algebra $A$ is derived Morita equivalent to a finite dimensional one, say $B$. That is, there is a quasi-equivalence of dg categories of perfect complexes
\begin{displaymath}
\mcF \colon \PERF A \to \PERF B,
\end{displaymath} 
with associated quasi-isomorphism $A \stackrel{\sim}{\to} \PERF B(\mcF A, \mcF A)$. By \cite{orlov2019finite}*{Proposition~2.5} the dg algebra $\PERF B(\mcF A, \mcF A)$ is quasi-isomorphic to a finite dimensional one.
\end{proof}

\begin{rem}
\label{rem:fd}
One can also give a direct proof of Corollary \ref{cor:fd}, which we sketch here. First replace $A$ by a minimal $A_{\infty}$-model $\aa$. Now, by \cite[Lemma 7.4.0.1]{lefevre2002categories}, there is a quasi-fully faithful $A_{\infty}$-Yoneda functor $\aa\r \Modu\aa$ with target the dg category of right $A_\infty$ $\aa$-modules. The explicit description of the hom-complexes in this dg category \cite{MR2600992} then allows one to show that the dg algebra $\underline{\hom}_{\Modu\aa}(\aa,\aa)$ (which is $A_{\infty}$-quasi-isomorphic to $\aa$) is locally finite and bounded above. By truncating in sufficiently negative degrees to avoid cohomology, we obtain a finite dimensional dg model for $A$.
\end{rem}

As another corollary, we obtain the following answer to \cite[Question 4.4]{MR3545926} in the connective case.

\begin{cor}
Every smooth proper noncommutative scheme that is connective is geometric.
\end{cor}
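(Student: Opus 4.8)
The final corollary asserts that every smooth proper noncommutative scheme which is connective is geometric. Here a noncommutative scheme (in the sense of Orlov, loc.\ cit.) is of the form $\Perf A$ for a DG-algebra $A$; it is \emph{smooth} (resp.\ \emph{proper}, resp.\ \emph{connective}) precisely when $A$ can be taken smooth (resp.\ proper, resp.\ connective); and it is \emph{geometric} if it admits a geometric realization in the sense of Definition~\ref{defn:gr}. So the statement is essentially a reformulation of Theorem~\ref{thm:main}: given such a noncommutative scheme, choose a proper connective DG-model $A$ (the smoothness hypothesis is not even needed for geometricity, though it is part of the hypotheses being invoked), and apply Theorem~\ref{thm:main} to produce a fully faithful Fourier--Mukai embedding $\Perf A \hookrightarrow D^b(\coh X)$ with $X$ smooth projective carrying a full exceptional collection.

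The one point requiring a word of care is that the notion of geometric realization in \cite{MR3545926} is phrased for noncommutative schemes rather than for a chosen DG-model, so one should observe that it is well-defined up to derived Morita equivalence: if $\Perf A \simeq \Perf A'$ via a Fourier--Mukai equivalence, then $A$ admits a geometric realization if and only if $A'$ does, since one may precompose the embedding with the equivalence (which is itself Fourier--Mukai, hence the composite is). This is exactly how Theorem~\ref{thm:main} is applied in its own proof, via the chain \eqref{eq:eq-ainf}, so nothing new is needed. One should also note that, by Corollary~\ref{cor:fd}, the connective smooth proper noncommutative scheme in question may even be represented by an honestly finite-dimensional DG-algebra, though this is not required for the corollary.

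There is essentially no obstacle here: the corollary is a packaging of Theorem~\ref{thm:main} into the language of Orlov's Question~4.4, and its content lies entirely in that theorem. The proof is therefore a single sentence.

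\begin{proof}
A connective smooth proper noncommutative scheme is by definition equivalent to $\Perf A$ for some proper connective DG-algebra $A$, and Theorem~\ref{thm:main} produces a geometric realization of $A$; since geometricity is invariant under derived Morita equivalence (a Fourier--Mukai equivalence may be composed with the embedding of Theorem~\ref{thm:main}), the noncommutative scheme is geometric.
\end{proof}
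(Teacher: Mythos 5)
There is a genuine (if small) gap: you have conflated the paper's Definition~\ref{defn:gr} of a \emph{geometric realization} (a fully faithful enhanced functor $\Perf A \to D^b(\coh X)$) with Orlov's notion of a noncommutative scheme being \emph{geometric}, which is what the corollary actually asserts. In \cite{MR3545926} a smooth proper noncommutative scheme is called geometric when it is realized as an \emph{admissible} subcategory of $D^b(\coh X)$ for some smooth projective $X$, i.e.\ the embedding must have left and right adjoints. Theorem~\ref{thm:main} only supplies the fully faithful embedding; to conclude geometricity one must still upgrade the image to an admissible subcategory. This is exactly where the smoothness hypothesis enters, and it is why the paper's proof cites \cite[Proposition~3.17]{MR3545926} alongside Theorem~\ref{thm:main}: smoothness (together with properness) of $\Perf A$ forces any such fully faithful embedding into the derived category of a smooth projective variety to be admissible. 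Your explicit remark that ``the smoothness hypothesis is not even needed'' is therefore the tell that this step has been skipped --- without smoothness the corollary, as stated in Orlov's language, does not follow from Theorem~\ref{thm:main} alone.

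The rest of your argument (Morita-invariance of admitting a geometric realization, composing Fourier--Mukai functors) is fine and matches what the paper does implicitly. The fix is one sentence: after invoking Theorem~\ref{thm:main}, observe that $\Perf A$ is smooth and proper, hence by \cite[Proposition~3.17]{MR3545926} (or equivalently via regularity, as in the proof of Corollary~\ref{cor:motive}) the fully faithful embedding is admissible, so the noncommutative scheme is geometric in Orlov's sense.
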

\begin{proof}
This follows from Theorem \ref{thm:main} combined with \cite[Proposition 3.17]{MR3545926}. 
\end{proof}



\section{An isomorphism of noncommutative Chow motives}

The geometricity result of the previous section has strong implications for the noncommutative motive of a smooth proper connective dg algebra. We will briefly recall some details of the theory and then provide two proofs that all additive invariants of a smooth proper connective dg algebra $A$ are determined by $\sfH^0(A)/\rad(\sfH^0(A))$: the first is a consequence of Theorem~\ref{thm:main} and the latter is a direct argument inspired by work of Tabuada-Van den Bergh \cite[Theorem 3.15]{MR3315059} and Keller \cite[\S 2.5]{MR1492902}.


\subsection{Recollections and the geometric proof}

Let us begin by briefly recapitulating some of the relevant details on noncommutative motives. For more details and references we refer the reader to the monograph \cite{MR3379910}. Denote by $\mathrm{dgcat}(k)$ the category of small dg categories and dg functors.

\begin{defn}
A functor $E\colon \mathrm{dgcat}(k) \to \AC$, with values in an additive category $\AC$ is called an \textit{additive invariant} if:
\begin{enumerate}
\item it sends Morita equivalences to isomorphisms,
\item given small dg categories $\mcA, \mcB$ and a dg $\mcB$-$\mcA$-bimodule $M$, the morphism 
$$E(\mcA) \oplus E(\mcB) \to E(\mcA \oplus_M \mcB)$$
induced by the obvious inclusion functors is an isomorphism, where $\mcA \oplus_M \mcB$ is the glued (aka upper triangular matrix) dg category.
\end{enumerate}
\end{defn}

In \cite{MR2196100}, a category $\mathrm{Hmo}_0(k)$ along with a functor
\begin{equation}
\label{eq:univ}
U(-)\colon \mathrm{dgcat}(k) \to \mathrm{Hmo}_0(k)
\end{equation}
were constructed, such that for any additive category $\AC$ the pushforward
\begin{equation}
U(-)_*\colon \mathrm{Fun}(\mathrm{Hmo}_0(k),\AC) \to \mathrm{Add}(\mathrm{dgcat}(k),\AC)
\end{equation}
induces an equivalence of categories, where on the left hand side we consider the category of additive functors from $\mathrm{Hmo}_0(k)$ to $\AC$, and on the right hand side we consider the category of additive invariants. Hence it makes sense to call \eqref{eq:univ} the \textit{universal additive invariant}. For this reason, $\mathrm{Hmo}_0(k)$ is called the category of \textit{noncommutative motives.}

\begin{defn}
The category of \textit{noncommutative Chow motives} $\mathrm{NChow}(k)$ is the idempotent completion of the full subcategory of $\mathrm{Hmo}_0(k)$ consisting of the objects $U(\mathcal{A})$ with $\mathcal{A}$ a smooth and proper dg category.
\end{defn}

From Theorem~\ref{thm:main} we deduce the following structural result.

\begin{cor}\label{cor:motive}
Let $k$ be an algebraically closed field and let $A$ be a smooth proper connective dg algebra over $k$. The noncommutative motive of $A$ is isomorphic to a direct sum of copies of the noncommutative motive of $k$.
\end{cor}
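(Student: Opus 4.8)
The strategy is to reduce the statement about the noncommutative motive to the structural results already established. First I would invoke Theorem~\ref{thm:main}: since $A$ is a proper connective DG-algebra, there is a geometric realization $\Perf A \hookrightarrow \sfD^\b(\coh X)$ for some smooth projective scheme $X$ admitting a full exceptional collection. In fact, unwinding the proof of Theorem~\ref{thm:main}, the relevant intermediate object is the Auslander $A_\infty$-category $\GGamma$, which satisfies $\Perf A \hookrightarrow \Perf\GGamma$ and $\Perf\GGamma = \langle \Perf S, \ldots, \Perf S\rangle$ with $S$ a finite product of copies of $k$ (here I use that, after Lemma~\ref{lem:basic}, $\sfH^0$ of the relevant model is basic). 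The key point is that $\Perf A$ is a \emph{semi-orthogonal summand} (indeed an admissible subcategory) of a category built by repeated gluing from copies of $\Perf k$.

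\textbf{From the exceptional collection to the motive.} The second step uses the additivity property of noncommutative motives. A full exceptional collection, or more generally a finite semi-orthogonal decomposition $\mcT = \langle \mcT_1, \ldots, \mcT_r\rangle$, is realized at the DG-level by iterated gluing along bimodules; by the defining property of an additive invariant (and hence of the universal one $U$), one obtains $U(\mcT) \cong \bigoplus_{i=1}^r U(\mcT_i)$. Applying this to $\Perf\GGamma = \langle \Perf S, \ldots, \Perf S\rangle$, and using $U(\Perf S) \cong U(S) \cong U(k)^{\oplus s}$ where $s$ is the number of simple $\sfH^0$-modules, we get that $U(\Perf\GGamma)$ is a finite direct sum of copies of $U(k)$. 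Since $\Perf A$ is an admissible subcategory of $\Perf\GGamma$, it is a semi-orthogonal summand, so $U(\Perf A)$ is a direct summand of $U(\Perf\GGamma) \cong U(k)^{\oplus N}$. Finally, $\mathrm{NChow}(k)$ is idempotent complete and $U(k)$ is the $\otimes$-unit; a summand of $U(k)^{\oplus N}$ in the idempotent-complete additive category $\mathrm{NChow}(k)$ is again a finite sum of copies of $U(k)$, provided one knows that $\End(U(k)) = \ZZ$ (which holds since $k$ is a field) so that the relevant matrix algebra is just $\mathrm{Mat}_N(\ZZ)$ and idempotents split into standard ones up to conjugacy. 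This gives $U(A) \cong U(k)^{\oplus n}$.

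\textbf{Identifying the exponent, and the main obstacle.} One should note $n$ is forced to equal the number of simple modules of $\sfH^0(A)$: the class of $U(A)$ in the Grothendieck group, or simply the rank of $K_0$, is a derived-Morita invariant, and $K_0(\Perf A) \cong K_0(\Perf \sfH^0(A)) \cong \ZZ^{n}$ by the t-structure argument mentioned in the introduction. I expect the main technical obstacle to be the last step: controlling idempotents in $\mathrm{NChow}(k)$ to conclude that a summand of $U(k)^{\oplus N}$ really is a sum of copies of $U(k)$ rather than some exotic motive. This requires knowing $\End_{\mathrm{Hmo}_0(k)}(U(k)) = \ZZ$, and then using that idempotent $N \times N$ integer matrices are conjugate to diagonal $\{0,1\}$-matrices (equivalently, finitely generated projective $\ZZ$-modules are free). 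A cleaner route, which I would actually adopt, avoids this entirely: apply the additivity of $U$ directly to the semi-orthogonal decomposition of $\Perf\GGamma$ \emph{after} first arranging (via Theorem~\ref{thm:main} and its proof) that $\Perf A$ itself sits as one of the pieces in a semi-orthogonal decomposition of something built from copies of $\Perf k$ — but since the embedding $\Perf A \hookrightarrow \Perf\GGamma$ is only known to be admissible, not that $\Perf A$ is generated by exceptional objects, the summand argument seems unavoidable. Hence the honest proof runs: Theorem~\ref{thm:main} $\Rightarrow$ $\Perf A$ admissible in a category with full exceptional collection $\Rightarrow$ $U(A)$ a summand of $U(k)^{\oplus N}$ $\Rightarrow$ (idempotent completeness of $\mathrm{NChow}$ plus $\End U(k) = \ZZ$) $U(A) \cong U(k)^{\oplus n}$.
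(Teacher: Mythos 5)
Your proposal follows the paper's own route almost exactly: Theorem~\ref{thm:main} produces an ambient category with a full exceptional collection (you use $\Perf\mathsf{\Gamma}$ where the paper uses $\sfD^\b(\coh X)$, an immaterial difference), additivity of $U$ gives $U(k)^{\oplus N}$ for the ambient category, and the motive of an admissible subcategory is a summand. Your extra care about the splitting of idempotents in $\mathrm{NChow}(k)$ --- using $\End(U(k))\cong K_0(k)=\ZZ$ and the freeness of finitely generated projective $\ZZ$-modules --- is welcome; this is exactly the point the paper leaves implicit when it concludes ``a direct sum of copies of the motive of $k$'' (cf.\ the ``unit type'' terminology of \cite{MR3090263}).

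The one genuine gap is that you assert, repeatedly, that $\Perf A$ is an \emph{admissible} subcategory of the ambient category, without any justification. A fully faithful enhanced embedding is not automatically admissible, and this is precisely the step where the smoothness hypothesis on $A$ enters --- nowhere else in your argument is smoothness used, yet the statement fails without it (Theorem~\ref{thm:main} applies to any proper connective $A$, but for instance the motive of $k[x]/(x^2)$ is not a sum of copies of $U(k)$, as one already sees on $K_1$). The paper closes this gap by observing that smoothness of $A$ implies $\Perf A$ is regular (Proposition~\ref{prop:regular}), and regularity together with properness yields admissibility of the embedded subcategory by \cite[Proposition 3.17]{MR3545926}. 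With that one sentence supplied, your argument is complete and coincides with the paper's.
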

\begin{proof}
By the theorem we have an embedding
\begin{equation*}
\Perf A \to \sfD^\b(\coh(X)),
\end{equation*}
for some smooth projective scheme $X$ with a full exceptional collection. The existence of the full exceptional collection implies that $U(\sfD^\b(\coh(X))) \cong U(k)^{\oplus n}$. The noncommutative motive of an admissible subcategory is a summand of that of the ambient category, so it is sufficient to show that $\Perf A$ is admissible.

By Proposition~\ref{prop:regular} $\Perf A$ is regular and admissibility follows (cf.\ \cite[Proposition 3.17]{MR3545926} for instance).
\end{proof}

One can relax the assumption that $k$ is algebraically closed in the Corollary. This is done in the next section\textemdash{}see Theorem~\ref{th:motives} for a precise statement.


\subsection{A direct proof}

The first thing we will need is that the Grothendieck group of a proper connective dg algebra $A$ is finite free. This can be viewed as a generalization of the corresponding fact for finite dimensional algebras; in fact, if $A$ is smooth then it follows from the statement for finite dimensional algebras via the standard t-structure with heart $\modu \sfH^0(A)$.

In this section we work over a perfect field $k$.

\begin{lem}
\label{lem:K0}
If $A$ is proper, then the Grothendieck group $K_0(A)$ is free abelian on the indecomposable summands of $A$ in $\Perf A$.
\end{lem}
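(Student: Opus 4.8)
The goal is to show that $K_0(A)$ is free abelian with basis given by the classes of the indecomposable summands of $A$ in $\Perf A$. Since $A$ is proper, $\Perf A$ is Krull--Schmidt, so write $A \cong \bigoplus_{i=1}^n P_i^{\oplus m_i}$ with $P_1,\ldots,P_n$ pairwise non-isomorphic indecomposables; as in Lemma~\ref{lem:basic} these generate $\Perf A$. The plan is to produce, for each $i$, a group homomorphism $K_0(A) \to \ZZ$ which detects the multiplicity of $P_i$, thereby showing the classes $[P_i]$ are linearly independent and span a free summand; combined with the fact that they generate $K_0(A)$ (since the $P_i$ generate $\Perf A$), this gives the result.

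\textbf{Key steps.} First I would pass to the derived endomorphism DG-algebra $B := \mathrm{R}\End_{\Perf A}(P_1 \oplus \cdots \oplus P_n)$, which is derived Morita equivalent to $A$ (hence has the same $K_0$) and is basic in the sense that $\Perf B$ has $n$ pairwise non-isomorphic indecomposable ``projectives''. By the standard connective formalism, one can replace $B$ by its minimal $A_\infty$-model $\A = \sfH^*(B)$, and the key structural input is that $\A$ is connective with $\sfH^0(\A) =: \Lambda$ a finite-dimensional basic algebra. The crucial point is that the good truncation gives a sequence $\A \to \Lambda$ inducing $\Perf \A \to \Perf \Lambda$, and one shows this induces an \emph{isomorphism} on $K_0$: surjectivity is clear since the images of the indecomposable summands of $\A$ are the indecomposable projective $\Lambda$-modules, and injectivity follows because the standard t-structure on $\sfD(\A)$ (with heart $\modu \Lambda$, using connectivity) lets one build a class in $\Perf \A$ from its image in $K_0(\Lambda)$, or alternatively because the truncation functor admits a one-sided inverse on $K_0$ coming from viewing a $\Lambda$-module as an $\A$-module via restriction along $\A \to \Lambda$. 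Finally, invoke the classical fact that for a finite-dimensional algebra $\Lambda$, $K_0(\Perf \Lambda) = K_0(\proj \Lambda)$ is free abelian on the indecomposable projectives (equivalently on the simples), which transports back along the Morita equivalence to the statement that $K_0(A)$ is free on the $[P_i]$.

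\textbf{Main obstacle.} The one step requiring genuine care is the claim that $\Perf \A \to \Perf \Lambda$ induces an isomorphism on Grothendieck groups, or equivalently that restriction of scalars along the truncation $\A \to \sfH^0(\A)$ and the induced functor are mutually inverse on $K_0$. The subtlety is that $\Perf \A \to \Perf \Lambda$ is not an equivalence of categories (the higher cohomology of $\A$ genuinely matters), so one must argue at the level of $K_0$ directly: the cleanest route is to use the connective t-structure on $\sfD(\A)$ restricted to $\Perf A$, noting that every perfect $\A$-module has finitely many nonzero cohomology groups each a finite-length $\Lambda$-module (by properness), so that its class equals the alternating sum of the classes of its cohomologies pulled back from $\modu \Lambda$; since $K_0(\modu \Lambda) \cong K_0(\proj \Lambda)$ for $\Lambda$ of finite global dimension --- or more robustly, since the composite $K_0(\Perf\Lambda) \to K_0(\Perf\A) \to K_0(\Perf\Lambda)$ is the identity --- one concludes. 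I would present this via the restriction-of-scalars argument, as it avoids any finiteness-of-global-dimension hypothesis on $\Lambda$.
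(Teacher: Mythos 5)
The paper disposes of this lemma in one line, by citing \cite[Theorem 2.27]{MR2927802}: since $A$ is connective we have $\Hom(A,\Sigma^iA)=\sfH^i(A)=0$ for $i>0$, so $A$ is a silting object of the Krull--Schmidt category $\Perf A$, and for a silting object the Grothendieck group is free on its indecomposable summands. Your direct approach founders at its central step --- the injectivity of $K_0(\Perf A)\to K_0(\Perf B)$ for $B=\sfH^0(A)$ --- which is exactly where the content of the lemma lives, and neither mechanism you offer establishes it.

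First, the ``alternating sum of cohomologies'' does not define a map to $K_0(\Perf B)$ but to $G_0(B)=K_0(\modu B)$ (the cohomology modules of a perfect complex are not themselves perfect), and the resulting map $K_0(\Perf A)\to G_0(B)$ is a Cartan map, which can fail to be injective even when $A$ is concentrated in degree $0$: for the radical-square-zero algebra of the quiver $1\rightleftarrows 2$ the Cartan matrix is $\left(\begin{smallmatrix}1&1\\1&1\end{smallmatrix}\right)$, so $[P_1]-[P_2]$ is killed although $K_0(\proj B)\cong\ZZ^2$. Second, restriction of scalars along $A\to B$ does not preserve perfection: for $A=k[\epsilon]/(\epsilon^2)$ with $|\epsilon|=-1$ and zero differential, $B=k$ is not perfect over $A$; in this paper perfection of such restrictions is precisely a smoothness phenomenon (Proposition~\ref{lem:3}, Theorem~\ref{thm:criterion}), and smoothness is not assumed here. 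So the proposed one-sided inverse $K_0(\Perf B)\to K_0(\Perf A)$ is undefined; and even formally, the composite you write down, $K_0(\Perf B)\to K_0(\Perf A)\to K_0(\Perf B)=\id$, would prove surjectivity of the second map, not the injectivity you need. A smaller issue: generation of $\Perf A$ by the $P_i$ does not by itself imply that their classes generate $K_0(A)$, since the subgroup they generate need not be closed under passage to direct summands; this too only follows once injectivity is in place. The honest argument uses the silting property to equip every perfect complex with a weight (``silting'') filtration having graded pieces in $\add(\Sigma^jA)$ and well-defined multiplicities; that is the substance of the cited theorem and cannot be bypassed by passing to $\sfH^0$.
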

\begin{proof}
This is a particular case of the more general \cite[Theorem 2.27]{MR2927802}.
\end{proof}

\begin{lem}
\label{prop:K0}
If $A$ is proper, and $I$ is a dg ideal of $A$ such that $\sfH^0(I)$ is nilpotent in $\sfH^0(A)$, then the quotient map $\pi\colon A \to A/I$ induces an isomorphism
\begin{equation}
K_0(A) \xrightarrow{\cong} K_0(A/I).
\end{equation}
\end{lem}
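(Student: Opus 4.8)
The strategy is to reduce the claim to the known statement for connective DG-algebras whose $\sfH^0$ is nilpotently thickened by a single idempotent-free ideal, and ultimately to the comparison of perfect complexes across the quotient map. First I would observe that since $A$ is proper, so is $A/I$: the cohomology of $A/I$ is a subquotient of that of $A$ in the derived category, and properness passes to $A/I$ because $I$ (hence $A/I$) is perfect over $\base$. Likewise $A/I$ is connective, as $\sfH^{>0}(A/I)$ is a quotient of $\sfH^{>0}(A)=0$. So Lemma~\ref{lem:K0} applies to both $A$ and $A/I$, and $K_0(A)$, $K_0(A/I)$ are free on the indecomposable summands of $A$, respectively $A/I$, in their categories of perfect complexes.

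Next I would analyze the restriction-of-scalars and extension-of-scalars functors along $\pi\colon A \to A/I$. The key point is that $-\otimes_A^{\mathbf L} A/I\colon \Perf A \to \Perf(A/I)$ induces a surjection on $K_0$ (since $A/I$ generates $\Perf(A/I)$ and is hit), and I want to show it is in fact a bijection. For this I would pass to the hearts of the standard t-structures: $\sfH^0(\pi)\colon \sfH^0(A) \to \sfH^0(A/I)$ is a surjection of finite-dimensional algebras whose kernel $\sfH^0(I)$ is nilpotent, so the classical statement (e.g.\ the fact that a nilpotent ideal induces an isomorphism on $K_0$ of finite-dimensional algebras, via lifting idempotents) gives $K_0(\sfH^0(A)) \cong K_0(\sfH^0(A/I))$. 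Then, using that $\add(A) \xrightarrow{\sim} \proj\sfH^0(A)$ via $\sfH^0$ (as in the proof of Lemma~\ref{lem:basic}), the indecomposable summands of $A$ biject with those of $A/I$ compatibly with $\pi$: a decomposition $A \cong \bigoplus P_i^{\oplus m_i}$ maps to $A/I \cong \bigoplus (P_i \otimes_A A/I)^{\oplus m_i}$, and the $P_i \otimes_A A/I$ remain indecomposable and pairwise non-isomorphic because their images under $\sfH^0$ are the indecomposable projectives of $\sfH^0(A/I)$, which are in bijection with those of $\sfH^0(A)$. Hence $\pi_*$ carries the free basis of $K_0(A)$ given by Lemma~\ref{lem:K0} bijectively to the free basis of $K_0(A/I)$, proving the claimed isomorphism.

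The main obstacle I anticipate is the passage between the DG/$A_\infty$ level and the classical level of $\sfH^0$: one needs to be careful that idempotents in $\sfH^0(A)$ lift to idempotents in $\Perf A$ (equivalently, to direct-sum decompositions of $A$ as a DG-module), and that this lifting is compatible with $\pi$. This is exactly where Krull--Schmidt for $\Perf A$ and the equivalence $\add(A)\simeq\proj\sfH^0(A)$ do the work; the subtlety is that a priori $\Perf(A/I)$ could have indecomposable summands of $A/I$ that split further than expected, but the $\sfH^0$-equivalence on $\add$ rules this out. The remaining ingredients — that nilpotent ideals induce $K_0$-isomorphisms for finite-dimensional algebras, and that properness and connectivity descend to $A/I$ — are standard. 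I would organize the write-up as: (1) $A/I$ is proper connective; (2) reduce to $\sfH^0$ via the $\add/\proj$ equivalence; (3) invoke the classical nilpotent-ideal statement; (4) conclude via Lemma~\ref{lem:K0}.
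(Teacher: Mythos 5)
Your proposal is correct and follows essentially the same route as the paper: both apply Lemma~\ref{lem:K0} to $A$ and $A/I$ and then match up indecomposable summands by lifting idempotents through the nilpotent kernel $\sfH^0(I)\subset\rad(\sfH^0(A))$ (the paper phrases this directly in terms of idempotents lifted from $\sfH^0(A)/\rad(\sfH^0(A))$, you phrase it via $\add(A)\simeq\proj\sfH^0(A)$ and the classical nilpotent-ideal statement, which is the same mechanism). No substantive difference.
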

\begin{proof}
Lemma \ref{lem:K0} applies both to $A$ and $A/I$. Moreover, since $\sfH^0(I)$ is nilpotent, $\sfH^0(I) \subset \rad(\sfH^0(A))$. The idempotents giving the indecomposable summands of $A$ in $\Perf A$ are lifted from $\sfH^0(A)/\rad(\sfH^0(A))$. This also holds for $A/I$ by nilpotence of $\sfH^0(I)$. In particular, the functor
\begin{equation}
\pi^*\colon \Perf A \to \Perf A/I
\end{equation}
induced by $\pi$ induces a bijection between the indecomposable summands of $A$ and those of $A/I$, and hence induces an isomorphism on the level of $K_0$.
\end{proof}

With these results in hand we now prove the theorem. 

\begin{thm}
\label{th:motives}
If $A$ is smooth, proper, and connective, then the morphism \\ $\pi\colon A \to \sfH^0(A)/\rad(\sfH^0(A))$ induces an isomorphism
\begin{equation}
\label{eq:chow}
U(\pi)\colon U(A) \xrightarrow{\sim} U(\sfH^0(A)/\rad(\sfH^0(A)))
\end{equation} 
in the category $\mathrm{NChow}(k)$ of noncommutative Chow motives.
\end{thm}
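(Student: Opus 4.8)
The plan is to build up the isomorphism through a short chain of reductions, peeling off the derived/higher-degree information and then the radical, using the two lemmas on Grothendieck groups as a skeleton but promoting everything to the level of $\mathrm{NChow}(k)$. First I would reduce to the affine (non-derived) world: since $A$ is smooth, proper and connective, the standard $t$-structure on $\sfD(A)$ has heart $\modu\sfH^0(A)$, and one wants to show that the canonical map $A\to\sfH^0(A)$ induces an isomorphism $U(A)\xrightarrow{\sim}U(\sfH^0(A))$. The point is that the DG-ideal $I=\A^{<0}$ (equivalently, the kernel of $A\to\sfH^0(A)$, after replacing $A$ by a model concentrated in non-positive degrees) has $\sfH^0(I)=0$, so it is certainly nilpotent in $\sfH^0(A)$; Lemma~\ref{prop:K0} then gives a $K_0$-isomorphism, but for the motivic statement I would instead argue directly that $\Perf A\to\Perf\sfH^0(A)$ is an equivalence, or that the gluing/additivity axiom lets us identify $U(A)$ with $U(\sfH^0(A))$. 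Concretely: the idempotents splitting $A$ into indecomposables in $\Perf A$ lift from $\sfH^0(A)/\rad$, exactly as in the proof of Lemma~\ref{prop:K0}, and a module-theoretic argument (e.g. via \cite{HA}*{Corollary~7.2.2.19} as in Lemma~\ref{lem:basic}, or a direct filtration argument) shows the comparison functor is fully faithful with the same generators, hence an equivalence after idempotent completion.

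Second, I would handle the passage from the finite dimensional algebra $\Lambda:=\sfH^0(A)$ to $\Lambda/\rad\Lambda$. Here one cannot expect a Morita equivalence, so the argument is genuinely motivic: one filters $\Lambda$ by powers of its radical and uses the additivity axiom for noncommutative motives repeatedly. This is precisely the strategy of Tabuada--Van den Bergh \cite[Theorem 3.15]{MR3315059} and Keller \cite[\S 2.5]{MR1492902} alluded to in the text: an algebra which is an extension $0\to N\to\Lambda\to\Lambda/N\to0$ with $N$ a nilpotent ideal, viewed appropriately, presents $\Lambda$ as (Morita equivalent to) a gluing of $\Lambda/N$ along a bimodule, so $U(\Lambda)\cong U(\Lambda/N)$; iterating along the radical series of $\Lambda$ collapses everything to $U(\Lambda/\rad\Lambda)$. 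Since $k$ is perfect (our standing assumption in this subsection), $\Lambda/\rad\Lambda$ is a finite product of matrix algebras over finite separable extensions of $k$; by Morita invariance $U(\Lambda/\rad\Lambda)$ is a finite sum of copies of the $U$ of these field extensions, which is consistent with the statement (over algebraically closed $k$ this is literally $U(k)^{\oplus r}$, recovering Corollary~\ref{cor:motive} without invoking Theorem~\ref{thm:main}).

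Finally I would assemble the two steps: compose the isomorphism $U(A)\xrightarrow{\sim}U(\sfH^0(A))$ with the isomorphism $U(\sfH^0(A))\xrightarrow{\sim}U(\sfH^0(A)/\rad\sfH^0(A))$ and check that the composite is induced by the evident algebra map $\pi$, which amounts to a diagram chase through the identifications. The main obstacle, I expect, is making the additivity/gluing argument in the second step fully rigorous in the derived setting: one must exhibit $\Lambda$ (or a DG-model) as a \emph{gluing} $\mcA\oplus_M\mcB$ of quotients by radical powers along an honest bimodule, so that axiom (2) of an additive invariant applies verbatim. The cleanest route is probably to observe that for a split nilpotent extension the relevant gluing is standard, and that over a perfect field one may split $\Lambda/\rad$ off $\Lambda$ (Wedderburn--Malcev) to reduce each extension step to the split case; the bookkeeping of which bimodule is being glued, and the verification that the inclusion functors realize the motivic decomposition, is where the real care is needed. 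The first step, by contrast, I expect to be routine given the tools already developed in Section~\ref{sec:georeals}.
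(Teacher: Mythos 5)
Your strategy---factoring $\pi$ as $A \to \sfH^0(A) \to \sfH^0(A)/\rad(\sfH^0(A))$ and handling each stage by an equivalence or a gluing argument---contains a fatal gap at the second stage, and the first stage is also problematic. The universal additive invariant $U$ is \emph{not} nilinvariant: a nilpotent extension $0 \to N \to \Lambda \to \Lambda/N \to 0$ does \emph{not} exhibit $\Lambda$ as a gluing $\mcA\oplus_M\mcB$ in the sense of axiom (2). A gluing is an upper-triangular matrix construction and yields a semi-orthogonal decomposition of $\Perf$; a square-zero extension does not. For instance $k[x]/(x^2)$ is a square-zero extension of $k$ by $k$, but the corresponding gluing is the path algebra of $A_2$, and indeed $U(k[x]/(x^2))\not\cong U(k)$ in $\mathrm{Hmo}_0(k)$ because Hochschild homology is an additive invariant and $HH_*(k[x]/(x^2))\neq HH_*(k)$. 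So the proposed radical-filtration argument would ``prove'' a false statement for arbitrary finite dimensional algebras; the smoothness of $A$, which your argument never uses, is essential. The first stage is also shaky: $\Perf A \to \Perf \sfH^0(A)$ is \emph{not} an equivalence in general (the paper's graded Kronecker quiver is smooth, proper and connective with $\sfH\sfH^{-1}(A)=k$, hence not derived Morita equivalent to any finite dimensional algebra), and $\sfH^0(A)$ need not be smooth, so $U(\sfH^0(A))$ need not even lie in $\mathrm{NChow}(k)$, nor need it be isomorphic to $U(A)$.

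The paper's proof uses smoothness precisely to avoid asserting nilinvariance of $U$. Since $A$ is smooth and proper and $S=\sfH^0(A)/\rad(\sfH^0(A))$ is separable ($k$ perfect), both $U(A)$ and $U(S)$ lie in $\mathrm{NChow}(k)$, and by the Yoneda lemma it suffices to check that $\Hom_{\mathrm{NChow}(k)}(U(\Lambda),U(\pi))$ is an isomorphism for $\Lambda\in\{A,S\}$. These Hom-groups are computed as $K_0(\Lambda^{\op}\otimes_k A)$ and $K_0(\Lambda^{\op}\otimes_k S)$, and the map between them is induced by a surjection of proper connective DG-algebras whose kernel has nilpotent $\sfH^0$; Lemma~\ref{prop:K0} (nilinvariance of $K_0$, which \emph{is} true) finishes the argument. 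Your instinct to use the two $K_0$ lemmas as the skeleton was correct; the missing idea is that they enter not via an additivity/gluing argument promoted to $U$, but via the identification of morphism groups in $\mathrm{NChow}(k)$ with $K_0$ of tensor products.
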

\begin{proof}
Set $S:=\sfH^0(A)/\rad(\sfH^0(A))$. First note that $S$ is smooth and proper, since $k$ being perfect ensures $S$ is separable. By assumption $A$ is smooth and proper as well, so $U(A),U(S) \in \mathrm{NChow}(k)$. By the Yoneda lemma, to show \eqref{eq:chow} it suffices to show that 
\begin{equation}
U(\pi)_*\colon \Hom_{\mathrm{NChow}(k)}(U(\Lambda),U(A)) \to \Hom_{\mathrm{NChow}(k)}(U(\Lambda),U(S))
\end{equation}
is an isomorphism, for $\Lambda$ equal to $A$ and to $S$. By \cite[\S 4.1]{MR3379910}, this reduces to showing that 
\begin{equation}
\label{eq:K0'}
K_0(\Lambda^{\op} \otimes_k A) \to K_0(\Lambda^{\op} \otimes_k S)
\end{equation}
is an isomorphism for both values of $\Lambda$. 

Note that $\Lambda^{\op} \otimes_k A$ and $\Lambda^{\op} \otimes_k S$ are still proper and connective. Let $I$ denote the dg ideal $\ker(\pi\colon A\to S)$. We have an exact sequence of complexes
\begin{equation}
0 \to \Lambda^{\op}\otimes_k I \to \Lambda^{\op}\otimes_k A \to \Lambda^{\op}\otimes_k S \to 0,
\end{equation}
where the surjection is a dg algebra map inducing a surjective map with nilpotent kernel on $\sfH^0$. Thus $\Lambda^{\op}\otimes_k I$ is a dg ideal with nilpotent cohomology in degree $0$ and Lemma \ref{prop:K0} applies to yield that \eqref{eq:K0'} is an isomorphism.
\end{proof}

If we further specialize to $k$ being algebraically closed, then we obtain an isomorphism 
\begin{equation}
U(A) \xrightarrow{\cong}U(k)^{\oplus |\sfH^0(A)|}
\end{equation}
where $|\sfH^0(A)|$ is the number of non-isomorphic simple $\sfH^0(A)$-modules, and hence the noncommutative motive of $A$ is a sum of copies of the noncommutative motive of $k$, i.e. it is of \textit{unit type} \cite[\S 4]{MR3090263}, in agreement with Corollary~\ref{cor:motive}.

\begin{rem}
Note that we have opted to work with smooth and proper dg algebras instead of smooth and proper dg categories. This is no actual restriction, since any smooth and proper dg category has a classical generator, and is thus derived Morita equivalent to a smooth and proper dg algebra. (See \cite{MR3379910}*{Proposition~1.45} for a proof.)
\end{rem}


\subsection{Examples}

Let us give some simple examples dispelling some naive beliefs one could hold regarding the structure of smooth proper connective dg algebras.

\begin{ex}
Consider the graded Kronecker quiver
\begin{equation}
\begin{tikzcd}
1 \ar[shift left]{r}{a} \ar[shift right,swap,dashed]{r}{b} & 2,
\end{tikzcd}
\end{equation}
with $|a|=0$ and $|b|=-2$, as a dg algebra $A$ with trivial differential. Then $A$ is smooth, proper and connective, but $A$ is not derived Morita equivalent to a finite dimensional algebra. Indeed, one can calculate (directly, using that the algebra is quite small, or see \cite[Proposition 3.1]{MR3441111}) that
\begin{equation}
\sfH \sfH^{-1}(A)=k,
\end{equation}
and (dg algebras derived Morita equivalent to) finite dimensional algebras do not have negative Hochschild cohomology. 
\end{ex}

\begin{ex}
Consider the minimal $A_{\infty}$-algebra $A$ which has underlying quiver
\begin{equation}
\begin{tikzcd}
1 \ar{r}{a} \ar[bend right,dashed]{rrr}{e} & 2 \ar{r}{b} & 3 \ar{r}{c} & 4
\end{tikzcd}
\end{equation}
such that $|a|=|b|=|c|=0, |e|=-1$ and with higher multiplications determined by $m_1=0$, $m_2(a,b)=m_2(b,c)=0$, and $m_3(a,b,c)=e$. Then $A$ is smooth, proper and connective, but $A$ is not formal: to see this remember that a minimal model for $A$ is unique up to (non-unique) $A_{\infty}$-isomorphism \cite[Corollaire 1.4.1.4]{lefevre2002categories}, and one checks explicitly that there are no $A_{\infty}$-morphisms from $\sf{H}^*(A)$ to $A$. 
\end{ex}

Both of these examples are directed and so, as can be readily checked, admit full exceptional collections. This gives another computation of the noncommutative Chow motive (and a verification for the skeptical reader that the examples are really smooth).

One can also produce examples of smooth and proper connective dg algebras which do not admit a full exceptional collection, for instance via silting mutation \cite{MR2927802} from a smooth finite dimensional algebra with no full exceptional collection (see \cite{happel1991family}).

\begin{ex}
We give an explicit example. Consider the algebra $A_3$ described by the quiver 
\[
\begin{tikzcd}
1 \ar[bend left]{rr} \ar[bend right]{rr} && 2 \ar{ll}
\end{tikzcd}
\]
with arrows labelled $a_i\colon 1 \to 2$ for $i=1,2$ and $b\colon 2\to 1$, and relations $ba_1$ and $a_2b$. This is an $8$ dimensional algebra of finite global dimension (in fact it is smooth), and the proof of the first proposition in Section~3 of \cite{happel1991family} shows that $\Perf A_3$ admits no exceptional objects. 

One can then produce further examples by taking derived endomorphism rings of silting complexes. For instance, consider the triangle in $\Perf A_3$
\[
\begin{tikzcd}
P_2 \ar{r}{b} & P_1 \ar{r} & X \ar{r} & \Sigma P_2.
\end{tikzcd}
\]
By construction $P_1 \oplus X$ is a generator and it is easily checked to be silting but not tilting. We leave the computation of the corresponding dg algebra to the interested reader.
\end{ex}

We suspect there are a plethora of such examples which are, moreover, not derived Morita equivalent to an algebra. 



\section{Smoothness, regularity and t-structures}
\label{sec:appendix}

This section is devoted to recording a few technical results concerning the relationship between smoothness and regularity for dg algebras, which are particularly useful in the connective case.


\subsection{Smoothness implies regularity}\label{ssec:sir}

In this section assume $A$ is a dg algebra (not necessarily connective unless otherwise stated) over a commutative base ring $\mathbbm{k}$. The advantage of smooth dg algebras is that one has access to strong representability statements. The rub is that the representability statements are generally phrased in terms of another condition: we say our dg algebra $A$ is \emph{regular} if $\Perf A$ is strongly generated, i.e.\ there is a uniform bound on the number of cones required to build any object from $A$. We will show smoothness implies regularity in the cases where this is reasonable. When $\mathbbm{k}$ is a field this has been proved by Lunts \cite[Lemmas 3.5, 3.6]{lunts2010categorical} and the argument we give here is based upon his.

For an object $M\in \sfD(A)$ we set
\begin{equation}
\thick_1(M) = \add(\Sigma^iM \mid i\in \ZZ) \text{ and } \loc_1(M) = \Add(\Sigma^iM \mid i\in \ZZ),
\end{equation}
i.e. we start with $M$ and close under suspensions, sums, and summands (allowing infinite sums in the second case). We then inductively define further full subcategories in the usual way, e.g.\
\begin{equation}
\thick_n(M) = \add(N \mid \exists \text{ a triangle } N_{n-1} \to N \to N_1 \text{ with } N_i\in \thick_{i}(M)).
\end{equation}
When it is necessary to distinguish the ambient category we use a superscript to indicate the relevant dg algebra, for instance $\thick^\mathbbm{k}_n(M)$ versus $\thick^A_n(M)$. 

\begin{lem}\label{lem:2.5}
If $A\in \thick_d^{A^\mathrm{e}}(A^\mathrm{e})$ and $M\in \Perf A$ lies in $\loc^\mathbbm{k}_n(\mathbbm{k})$, then $M$ lies in $\thick^A_{n+d}(A)$.
\end{lem}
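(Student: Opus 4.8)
\textbf{Proof plan for Lemma~\ref{lem:2.5}.} The statement is a ``change of rings'' estimate: given that $A$ is built from $A^{\mathrm e}$ in $d$ cones (as a bimodule) and that $M$ is built from $\mathbbm k$ in $n$ steps (as a $\mathbbm k$-complex), we want to bound the number of cones needed to build $M$ from $A$ inside $\Perf A$. The plan is to run two separate base-change functors against each other. First I would consider the functor $-\otimes_{\mathbbm k} A\colon \sfD(\mathbbm k)\to \sfD(A)$; since it is triangulated and preserves coproducts, it sends $\loc^{\mathbbm k}_n(\mathbbm k)$ into $\loc^A_n(A)$, so $M\otimes_{\mathbbm k} A$ lies in $\loc^A_n(A)$. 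Because $M\in\Perf A$, a thick-versus-loc argument (Neeman/Ravenel--style: a perfect object lying in $\loc_n$ of a compact generator already lies in $\thick_n$ of it, or at worst we absorb a harmless constant) upgrades this to $M\otimes_{\mathbbm k} A\in \thick^A_n(A)$.

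The second ingredient is the $A^{\mathrm e}$-module hypothesis. View $M\otimes_{\mathbbm k} A$ as $M\otimes_{\mathbbm k}(-)$ applied to $A$, or better, observe that for any right $A$-module $N$ one has $N\otimes_{A}A^{\mathrm e}\cong N\otimes_{\mathbbm k}A$ as right $A$-modules (the left $A$-action on $A^{\mathrm e}$ being what gets used up). Hence applying the triangulated functor $M\otimes_A(-)\colon \sfD(A^{\mathrm e})\to \sfD(A)$ to the containment $A\in \thick^{A^{\mathrm e}}_d(A^{\mathrm e})$ yields $M\otimes_A A\in \thick^A_d(M\otimes_A A^{\mathrm e})$, i.e. $M\in \thick^A_d(M\otimes_{\mathbbm k}A)$. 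Combining the two: $M\in \thick^A_d(M\otimes_{\mathbbm k}A)$ and $M\otimes_{\mathbbm k}A\in\thick^A_n(A)$, and since $\thick_d(\thick_n(A))\subseteq \thick_{n+d}(A)$ by the standard subadditivity of the cone-length filtration, we conclude $M\in \thick^A_{n+d}(A)$.

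The main thing to be careful about — and the step I expect to be the real obstacle — is the passage from $\loc_n$ to $\thick_n$ for the perfect object $M\otimes_{\mathbbm k}A$, and more precisely whether the index stays exactly $n$ or incurs an off-by-one. One must check that a compact object in the $n$-th step of the localizing-subcategory filtration generated by a compact generator actually sits in the $n$-th step of the thick-subcategory filtration; this is where one invokes that coproducts of objects in $\thick_1(A)$ are retracts of objects in $\loc_1(A)$ and runs a countable-coproduct/telescope argument, tracking that no extra cone is spent. I would also double-check the identification $N\otimes_A A^{\mathrm e}\cong N\otimes_{\mathbbm k} A$ respects the right $A$-module structures on the nose at the level of DG-modules (so that all functors are honestly triangulated, not just on homotopy categories), and that $M\otimes_{\mathbbm k}A$ is genuinely perfect over $A$ so that the thick-subcategory statement is applicable — both are routine but need a sentence. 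Finally, the subadditivity $\thick_a\circ\thick_b\subseteq \thick_{a+b}$ is the familiar octahedral-axiom bookkeeping and can be cited or dispatched in a line.
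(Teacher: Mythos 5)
Your overall strategy---playing $-\otimes_{\mathbbm{k}}A$ against $M\otimes_A-$ and adding the cone-lengths---is exactly the paper's, but there is a genuine gap in the order in which you convert $\loc$ into $\thick$. You propose to upgrade $M\otimes_{\mathbbm{k}}A\in\loc^A_n(A)$ to $M\otimes_{\mathbbm{k}}A\in\thick^A_n(A)$, flagging as ``routine'' the check that $M\otimes_{\mathbbm{k}}A$ is perfect over $A$. It is not routine; it is false in general. As a right $A$-module, $M\otimes_{\mathbbm{k}}A\cong M\otimes_A A^{\mathrm{e}}$ is essentially the free $A$-module on the underlying $\mathbbm{k}$-complex of $M$, and nothing in the hypotheses makes $M$ perfect over $\mathbbm{k}$ (the lemma is applied to smooth $A$ that need not be proper). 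Concretely, for $\mathbbm{k}$ a field, $A=M=\mathbbm{k}[x]$ one has $M\otimes_{\mathbbm{k}}A\cong\mathbbm{k}[x,y]$, an infinite-rank free right $\mathbbm{k}[x]$-module, which is not compact and hence lies in no $\thick^A_n(A)$ whatsoever---even though the conclusion of the lemma ($M\in\thick^A_1(A)$) is trivially true. So your intermediate claim cannot be repaired by bookkeeping; the loc-to-thick passage must not be performed on $M\otimes_{\mathbbm{k}}A$.

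The fix is to postpone that passage to the very end, which is what the paper does: from $A\in\thick^{A^{\mathrm{e}}}_d(A^{\mathrm{e}})$ and $M\cong M\otimes_AA$ one gets $M\in\thick^A_d(M\otimes_{\mathbbm{k}}A)\subseteq\loc^A_d(M\otimes_{\mathbbm{k}}A)$, and combining with $M\otimes_{\mathbbm{k}}A\in\loc^A_n(A)$ and subadditivity of the $\loc$-filtration yields $M\in\loc^A_{n+d}(A)$. Only now does one invoke the comparison between the coproduct-closed and finite filtrations---\cite[Proposition~2.2.4]{bondal2003generators}---applied to $M$ itself, which \emph{is} assumed perfect, to conclude $M\in\thick^A_{n+d}(A)$. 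Your second ingredient (applying $M\otimes_A-$ to the bimodule hypothesis) and the subadditivity bookkeeping are fine as stated.
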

\begin{proof}
The base change functor
\begin{equation}
\begin{tikzcd}
\sfD(\mathbbm{k})  \ar{rr}{-\otimes_\mathbbm{k} A} && \sfD(A)
\end{tikzcd}
\end{equation}
is exact and coproduct preserving so from $M\in \loc^\mathbbm{k}_n(\mathbbm{k})$ we deduce that $M\otimes_\mathbbm{k} A \in \loc^A_n(A)$. Now note that
\begin{equation}
M\otimes_\mathbbm{k} A \cong M\otimes_A A^\op\otimes_\mathbbm{k} A \text{ and } M\cong M \otimes_A A.
\end{equation}
Thus $A\in \thick_d^{A^\mathrm{e}}(A^\mathrm{e})$ implies that $M \in \thick^A_d(M\otimes_\mathbbm{k} A)$. Combining what we have learned we see that
\begin{equation}
M \in \loc^A_{n+d}(A)
\end{equation}
and perfection of $M$ implies, via \cite[Proposition~2.2.4]{bondal2003generators}, that $M\in \thick^A_{n+d}(A)$ as claimed.
\end{proof}

\begin{prop}\label{prop:regular}
If $A$ is smooth and $\mathbbm{k}$ has finite global dimension then $A$ is regular.
\end{prop}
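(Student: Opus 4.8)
The plan is to deduce regularity directly from Lemma~\ref{lem:2.5}. Since $A$ is smooth, by definition $A\in\Perf A^{\mathrm{e}}$, so there exists some $d$ with $A\in\thick_d^{A^{\mathrm{e}}}(A^{\mathrm{e}})$; fix such a $d$. Now let $M\in\Perf A$ be arbitrary. The first step is to observe that $M$, being perfect over $A$, is in particular an object of $\sfD(A)$ whose underlying $\mathbbm{k}$-complex has bounded, finitely generated cohomology — but more to the point, every object of $\sfD(\mathbbm{k})$ lies in $\loc_n^{\mathbbm{k}}(\mathbbm{k})$ for $n$ bounded uniformly, precisely because $\mathbbm{k}$ has finite global dimension: if $\gldim\mathbbm{k}=g$, then every $\mathbbm{k}$-module has a projective resolution of length $\le g$, and hence every object of $\sfD(\mathbbm{k})$ is built from copies of (shifts of) $\mathbbm{k}$ using at most $g+1$ steps in the $\loc$-filtration (one splits the complex into its cohomology modules via the Postnikov/canonical filtration and resolves each). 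So there is an $n=n(\mathbbm{k})$, independent of $M$, with $M\in\loc_n^{\mathbbm{k}}(\mathbbm{k})$ when $M$ is regarded in $\sfD(\mathbbm{k})$ via restriction of scalars.

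Then I would simply apply Lemma~\ref{lem:2.5}: from $A\in\thick_d^{A^{\mathrm{e}}}(A^{\mathrm{e}})$ and $M\in\loc_n^{\mathbbm{k}}(\mathbbm{k})$ we conclude $M\in\thick_{n+d}^A(A)$. Since $n+d$ does not depend on $M$, this exhibits a uniform bound on the number of cones needed to build any object of $\Perf A$ from $A$, which is exactly the assertion that $A$ is regular.

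The one point requiring a little care — and the step I expect to be the main (minor) obstacle — is the uniform bound $M\in\loc_n^{\mathbbm{k}}(\mathbbm{k})$ coming from $\gldim\mathbbm{k}<\infty$. One has to be slightly careful that $M$ need not have bounded cohomology a priori when viewed over $\mathbbm{k}$ unless one uses perfection over $A$; but $M\in\Perf A$ means $M$ is a summand of a finite iterated extension of shifts of $A$, and $A$ itself is a bounded complex over $\mathbbm{k}$ (indeed $A$ is connective, or in any case one only needs $A\in\sfD(\mathbbm{k})$ with the relevant finiteness), so $M$ has cohomology in a bounded range over $\mathbbm{k}$. Combining the bounded-range statement with the length-$g$ projective resolutions over $\mathbbm{k}$ gives the claimed uniform $n$. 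Alternatively, and perhaps cleanest, one invokes the standard fact that if $\gldim\mathbbm{k}=g$ then $\thick_{g+1}^{\mathbbm{k}}(\mathbbm{k})=\Perf\mathbbm{k}$, and correspondingly $\loc_{g+1}^{\mathbbm{k}}(\mathbbm{k})$ contains every object of $\sfD(\mathbbm{k})$ that is built from perfect pieces (again one reduces the unbounded case to the bounded one using that $M$ is perfect over $A$). Either way the bound is uniform, and the proposition follows.
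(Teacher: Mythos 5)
Your overall strategy is exactly the one the paper uses: smoothness gives $A\in\thick_d^{A^{\mathrm{e}}}(A^{\mathrm{e}})$ for some $d$, one then needs a \emph{single} $n$ with $M\in\loc_n^{\mathbbm{k}}(\mathbbm{k})$ for every $M\in\Perf A$, and Lemma~\ref{lem:2.5} finishes the job. The gap is in your justification of that uniform $n$, and you have worried about the wrong delicate point. The issue is not whether $M$ has bounded cohomology over $\mathbbm{k}$ (it does, as you say), but that the bound you extract from the Postnikov filtration is not uniform: a complex whose cohomology is spread over $N$ degrees is exhibited by its Postnikov tower as an $N$-fold extension of shifts of its cohomology modules, so resolving each layer in at most $g+1$ steps only places the complex in $\loc_{N(g+1)}^{\mathbbm{k}}(\mathbbm{k})$. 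Since the cohomological amplitude of $M$ over $\mathbbm{k}$ is unbounded as $M$ ranges over $\Perf A$ (already direct sums such as $A\oplus \Sigma^N A$, and more seriously complexes with nontrivial Postnikov invariants linking many cohomology groups, have arbitrarily large amplitude), this does not produce the single $n$ that regularity---a uniform bound on the number of cones---requires.

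The assertion that $\loc_n^{\mathbbm{k}}(\mathbbm{k})=\sfD(\mathbbm{k})$ for some finite $n$ depending only on $\mathbbm{k}$ is true, but for $\gldim\mathbbm{k}\geq 2$ it is genuinely nontrivial: complexes no longer split into their cohomologies, and one needs an interleaving or ghost-lemma style argument rather than the naive Postnikov one. This is precisely the input the paper imports from \cite{neeman2017strong}*{Theorem~2.1}, which supplies a perfect complex $G$ with $\loc_m^{\mathbbm{k}}(G)=\sfD(\mathbbm{k})$; since $G$ is built from $\mathbbm{k}$ in finitely many steps, $\loc_n^{\mathbbm{k}}(\mathbbm{k})=\sfD(\mathbbm{k})$ follows. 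The remark after the proposition (citing \cite{rouquier08}*{Lemma~7.13}) emphasises that this is exactly where finite global dimension enters and where the argument breaks down without it. With that citation, or a correct direct proof of the uniform bound, substituted for your parenthetical justification, the rest of your argument goes through verbatim.
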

\begin{proof}
Since $A$ is smooth we have $A\in \thick^{A^\mathrm{e}}(A^\mathrm{e})$ and so there exists some $d\geq 1$ such that $A\in \thick^{A^\mathrm{e}}_d(A^\mathrm{e})$. By \cite{neeman2017strong}*{Theorem~2.1} there exists a perfect complex $G$ over $\mathbbm{k}$ with $\loc^\mathbbm{k}_m(G) = \sfD(\mathbbm{k})$. As $G$ is perfect there is an $m'$ with $G\in \thick^\mathbbm{k}_{m'}(\mathbbm{k})$ and it follows that there is an $n\leq m+m'$ with $\loc^\mathbbm{k}_n(\mathbbm{k}) = \sfD(\mathbbm{k})$. We then see from Lemma~\ref{lem:2.5} that for each $M\in \Perf A$ we have $M\in \thick^A_{n+d}(A)$ i.e.\
\begin{equation}
\Perf A = \thick^A_{n+d}(A).
\end{equation}
Hence $\Perf A$ is strongly generated and $A$ is regular.
\end{proof}

\begin{rem}
If $\mathbbm{k}$ does not have finite global dimension then, even though we allow infinite coproducts, one doesn't get the required bound to apply Lemma~\ref{lem:2.5} (cf.\ \cite{rouquier08}*{Lemma~7.13}). This is not terribly surprising given the situation for honest rings: smooth algebras over $\mathbbm{k}$ won't in general be any better behaved than $\mathbbm{k}$.
\end{rem}

From this we can give a description of the perfect complexes over certain smooth connective dg algebras and show that the standard t-structure restricts to perfect complexes.

\begin{prop}\label{prop:t}
Suppose that $\mathbbm{k}$ has finite global dimension. Let $A$ be smooth connective and further assume that $\sfH^*(A)$ is coherent and $\sfH^{\leq -1}(A)$ is a finitely generated ideal. Then, there is an identification 
\begin{equation}
\Perf A = \{M\in \sfD(A) \mid \sfH^*(M) \text{ is finitely presented over } \sfH^*(A)\} =: \sfD^\mathrm{fp}(A).
\end{equation}
Moreover, if $\sfH^*(A)$ is noetherian, then the standard t-structure on $\sfD(A)$ restricts to $\Perf A$.
\end{prop}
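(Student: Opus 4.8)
The plan is to prove the two assertions in Proposition~\ref{prop:t} in turn, using regularity (Proposition~\ref{prop:regular}) as the key structural input for the first, and a standard dévissage argument for the second.

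For the inclusion $\Perf A \subseteq \sfD^{\mathrm{fp}}(A)$: every perfect complex is built from $A$ using finitely many cones, shifts and summands, so it suffices to check that $\sfD^{\mathrm{fp}}(A)$ is a thick subcategory of $\sfD(A)$ containing $A$. Membership of $A$ is part of the hypothesis ($\sfH^*(A)$ coherent means it is finitely presented over itself). For thickness, given a triangle $M'\to M\to M''$ with $M',M''\in\sfD^{\mathrm{fp}}(A)$, the long exact cohomology sequence together with coherence of $\sfH^*(A)$ (so that finitely presented modules are closed under kernels, cokernels and extensions) shows $\sfH^*(M)$ is finitely presented; closure under summands is automatic. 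For the reverse inclusion $\sfD^{\mathrm{fp}}(A)\subseteq\Perf A$, here is where smoothness enters: by Proposition~\ref{prop:regular}, $\Perf A = \thick^A_{n+d}(A)$ is strongly generated, so $A$ is a strong generator. One then argues that any $M\in\sfD^{\mathrm{fp}}(A)$ is perfect by induction on the "width" of the cohomology: using connectivity of $A$ and finite generation of $\sfH^{\leq -1}(A)$ one can build a bounded-above, degreewise finitely generated semifree (or cofibrant) replacement of $M$, truncate it, and observe that since $M$ lies in $\thick$ of $A$ up to the regularity bound the naive truncations stabilise — this is the standard "finitely presented cohomology over a coherent, regular DG-algebra implies perfect" argument, and the regularity bound is exactly what prevents the resolution from requiring infinitely many generators. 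I expect this direction to be the main obstacle: one must be careful that the hypotheses ($\sfH^*(A)$ coherent, $\sfH^{\leq -1}(A)$ finitely generated, $\mathbbm k$ of finite global dimension) genuinely suffice to control the semifree resolution, and the cleanest route is probably to first reduce to $\sfH^0(M)$ being a finitely presented $\sfH^0(A)$-module and handle the general case by the t-structure dévissage below, invoking regularity to bound the number of cones.

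For the second assertion, assume in addition $\sfH^*(A)$ is noetherian. We must show that if $M\in\Perf A$ then its standard truncations $\tau^{\leq 0}M$ and $\tau^{\geq 1}M$ are again perfect. By the identification just established, it is equivalent to show $\sfD^{\mathrm{fp}}(A)$ is closed under the standard truncation functors. The triangle $\tau^{\leq 0}M \to M \to \tau^{\geq 1}M$ shows, by thickness of $\sfD^{\mathrm{fp}}(A)$, that it suffices to prove one of the two truncations lands in $\sfD^{\mathrm{fp}}(A)$. Since $A$ is connective, $\sfH^*(A)$ is a connective graded ring, and for such rings the good truncation $\tau^{\geq 1}M$ of a module with finitely presented cohomology has finitely presented cohomology precisely when $\sfH^*(A)$ is (graded) noetherian — this is the graded analogue of the fact that over a noetherian ring a submodule of a finitely generated module is finitely generated. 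Concretely, $\sfH^j(\tau^{\geq 1}M) = \sfH^j(M)$ for $j\geq 1$ and $0$ otherwise, each of which is finitely generated over the noetherian ring $\sfH^*(A)$, hence finitely presented; thus $\tau^{\geq 1}M\in\sfD^{\mathrm{fp}}(A)=\Perf A$, and then $\tau^{\leq 0}M\in\Perf A$ by thickness. This completes the argument, modulo the routine verification that "finitely presented over $\sfH^*(A)$" behaves well under the graded-noetherian hypothesis, which is where I would be most careful to use the noetherian assumption rather than merely coherence.
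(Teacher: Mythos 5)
The easy inclusion $\Perf A\subseteq \sfD^{\mathrm{fp}}(A)$ and the final t-structure claim are fine and match the paper (the paper likewise dismisses the first as clear and handles the truncations by noting that truncating a finitely presented graded module over a noetherian graded ring yields a finitely presented module). The problem is the hard direction $\sfD^{\mathrm{fp}}(A)\subseteq\Perf A$, where your argument has a genuine gap and is in one place circular: you write that ``since $M$ lies in $\thick$ of $A$ up to the regularity bound the naive truncations stabilise,'' but $M\in\sfD^{\mathrm{fp}}(A)$ is precisely \emph{not} yet known to lie in $\thick^A(A)$ --- that is the statement to be proved. The regularity bound $\Perf A=\thick^A_{n+d}(A)$ applies only to objects already known to be perfect, so it cannot be invoked to make a semifree resolution of an arbitrary $M\in\sfD^{\mathrm{fp}}(A)$ terminate. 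The suggested fallback (reduce to $\sfH^0(M)$ and use the t-structure d\'evissage) is also circular, since the d\'evissage needs the truncations of $M$ to be perfect, which again presupposes the identification being proved (and, moreover, the noetherian hypothesis is only available for the second assertion).

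The paper closes this gap with representability rather than resolutions: for $M\in\sfD^{\mathrm{fp}}(A)$ one considers the cohomological functor $H=\Hom_{\sfD(A)}(-,M)$ on $\Perf A^{\op}$; finite presentation of $\sfH^*(M)$ over the coherent ring $\sfH^*(A)$ makes $H$ locally finitely presented (Greenlees--Stevenson), and Rouquier's representability theorem for strongly generated categories --- this is exactly where Proposition~\ref{prop:regular} enters --- produces $M'\in\Perf A$ representing $H$; the universal map $M'\to M$ is then a quasi-isomorphism because it is an isomorphism on $\Hom(\Sigma^iA,-)$. A resolution-theoretic variant of this (build a degreewise finitely generated semifree filtration and use a ghost-map argument to show $M$ is a retract of the $(n+d)$-th stage) could in principle be made to work, but that is essentially a proof of the representability theorem in this case, and you would need to supply the ghost-lemma step rather than assert that the truncations stabilise. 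As written, the key step is missing.
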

\begin{proof}
It is clear that $\Perf A \subseteq \sfD^\mathrm{fp}(A)$. If, on the other hand $M \in \sfD^\mathrm{fp}(A)$ then we can consider the cohomological functor
\begin{equation}
H=\Hom_{\sfD(A)}(-,M)\colon \Perf A^\op \to \Modu\mathbbm{k}.
\end{equation} 
By assumption the cohomology of $M$ is finitely presented over $\sfH^*(A)$ and so,  by \cite{GreenleesStevenson17}*{Proposition~4.6} the functor $H$ is locally finitely presented in the sense of \cite{rouquier08}*{Section~4.1.1}. By Proposition~\ref{prop:regular} we know that $A$ is regular. Hence, we can apply \cite{rouquier08}*{Theorem~4.16} to deduce that $H$ is represented by some $M' \in \Perf A$: 
\begin{equation}
H \cong \Hom_{\Perf A}(-,M').
\end{equation}
Using $\id_{M'}$, we obtain a universal morphism $f\colon M' \to M$
in $\sf{D}(A)$ which is, by the fact that $M'$ represents $H$, an isomorphism after applying $\Hom_{\sfD(A)}(\Sigma^iA,-)$ for any $i\in \ZZ$. In other words, $f$ is a quasi-isomorphism and we conclude $M \cong M'$ is perfect.

It remains to check that the t-structure restricts to $\sfD^\mathrm{fp}(A)$ when the cohomology is noetherian. This is immediate from the fact that the truncation of a finitely presented graded $\sfH^*(A)$-module is again finitely presented.
\end{proof}

The same method of proof works to show the following proposition. Denote by $B=\sfH^0(A)$ and consider the adjunction 
\begin{equation}
\begin{tikzcd}
\pi^*=-\otimes_{A} B:\sf{D}(A) \ar[shift left]{r} & \sf{D}(B):-\otimes_{B} B_{A}=\pi_* \ar[shift left]{l}
\end{tikzcd}
\end{equation}
where $\sf{D}^\mathrm{b}(\modu B)$ denotes the bounded derived category of finitely presented $B$-modules. 

\begin{prop}
\label{lem:3}
Suppose that $\mathbbm{k}$ has finite global dimension. Let $A$ be smooth connective and further assume that $\sfH^*(A)$ is coherent and $\sfH^{\leq -1}(A)$ is a finitely generated ideal. Then for any $X \in \sf{D}^\mathrm{b}(\modu B)$ the restriction $\pi_*X$ is perfect, i.e.\ $\pi_*X \in \Perf A$. In particular, we have
 \begin{equation}
 \pi^*\pi_*X \in \Perf B.
 \end{equation}
\end{prop}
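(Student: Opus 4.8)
The plan is to mimic the representability argument used in the proof of Proposition~\ref{prop:t}. First I would observe that for $X \in \sfD^\mathrm{b}(\modu B)$ the object $\pi_*X \in \sfD(A)$ has cohomology which is bounded and finitely generated over $\sfH^*(A)$: indeed $\pi_*$ is just restriction of scalars along $A \to B = \sfH^0(A)$, so $\sfH^*(\pi_*X) = \sfH^*(X)$ as a graded abelian group, with the $\sfH^*(A)$-module structure obtained by restriction along $\sfH^*(A) \to \sfH^0(A) = B$ (the negative-degree part $\sfH^{\leq -1}(A)$ acting as zero). Since $\sfH^*(X)$ is a bounded complex of finitely presented $B$-modules and $B$ itself is coherent (being $\sfH^0$ of a DG-algebra with coherent cohomology, or one argues this directly), $\sfH^*(\pi_*X)$ is finitely presented over $\sfH^*(A)$. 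Hence $\pi_*X \in \sfD^\mathrm{fp}(A)$, and by Proposition~\ref{prop:t} — which applies verbatim under the present hypotheses — we conclude $\pi_*X \in \Perf A$.

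Alternatively, and perhaps more cleanly, I would run the representability argument directly rather than invoking Proposition~\ref{prop:t} as a black box: form the cohomological functor $H = \Hom_{\sfD(A)}(-,\pi_*X)\colon \Perf A^\op \to \Modu\mathbbm{k}$, check via \cite{GreenleesStevenson17}*{Proposition~4.6} that it is locally finitely presented (using that $\sfH^*(\pi_*X)$ is finitely presented over $\sfH^*(A)$), invoke regularity of $A$ from Proposition~\ref{prop:regular}, and apply \cite{rouquier08}*{Theorem~4.16} to represent $H$ by a perfect complex $M'$; the universal map $M' \to \pi_*X$ is then a quasi-isomorphism by the usual argument. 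Either route gives the first assertion.

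For the final clause, once we know $\pi_*X \in \Perf A$, we apply the functor $\pi^* = -\otimes_A B$. Since $\pi^*$ is a functor of DG-algebras $\sfD(A) \to \sfD(B)$ sending $A$ to $B$, it carries $\Perf A$ into $\Perf B$ (perfect complexes are preserved by any such base-change functor, being built from the generator by finitely many cones and summands). Therefore $\pi^*\pi_*X \in \Perf B$, as claimed.

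The main obstacle is the coherence bookkeeping in the first step: verifying that restriction along $A \to B$ does not destroy finite presentation of cohomology, i.e.\ that a finitely presented $B$-module, viewed as an $\sfH^*(A)$-module via $\sfH^*(A) \twoheadrightarrow B$ (this surjectivity uses that $A$ is connective, so $\sfH^0(A) = B$ is a quotient ring of the graded ring $\sfH^*(A)$), remains finitely presented over the possibly-larger ring $\sfH^*(A)$. This is where the hypotheses that $\sfH^*(A)$ is coherent and $\sfH^{\leq -1}(A)$ is a finitely generated ideal are used: $B = \sfH^*(A)/\sfH^{\leq -1}(A)$ is then a finitely presented $\sfH^*(A)$-algebra, so finitely presented $B$-modules are finitely presented over $\sfH^*(A)$. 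Everything else is a direct transcription of the proof of Proposition~\ref{prop:t}.
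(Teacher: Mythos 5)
Your proposal is correct and follows essentially the same route as the paper: reduce to showing $\pi_*X\in\Perf A$ since $\pi^*$ preserves perfect complexes, observe that $\sfH^*(\pi_*X)$ is finitely presented over $\sfH^*(A)$ because $B=\sfH^*(A)/\sfH^{\leq -1}(A)$ is finitely presented over $\sfH^*(A)$, and then conclude by the representability argument of Proposition~\ref{prop:t} (the paper says ``proceeding exactly as in the proof of Proposition~\ref{prop:t}'', which is your second route; invoking the identification $\Perf A=\sfD^{\mathrm{fp}}(A)$ from its statement, your first route, is equivalent).
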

\begin{proof}
Since $\pi^*(\Perf A) \subset \Perf B$, it suffices to prove that for $X \in \sf{D}^\mathrm{b}(\modu B)$ the restriction $\pi_*X$ lies in $\Perf A$. Consider the cohomological functor 
\begin{equation}
H=\Hom_{\sfD(A)}(-,\pi_*X)\colon \Perf A^\op \to \Modu\mathbbm{k}.
\end{equation} 
Since $X$ has finitely presented cohomology over $B$ and, by assumption, $B$ is finitely presented over $\sfH^*(A)$ it follows that the cohomology of $\pi_*X$ is finitely presented over $\sfH^*(A)$. Proceeding exactly as in the proof of Proposition \ref{prop:t}, we find that $\pi_*X$ is perfect.
\end{proof} 


\subsection{From regularity to smoothness}

For a commutative local ring one can check regularity on the residue field, and for a finite dimensional algebra one can check the global dimension by examining the simples. If the base field is perfect (and under suitable finiteness conditions) one can then conclude smoothness.

The situation seems more delicate in the derived setting. We provide here a small step in this direction, which can be applied to our main objects of study: proper connective dg algebras. We return to working over a field $k$.

\begin{lem}\label{lem:Serre}
Let $A \stackrel{\pi}{\to} S$ be an augmented proper dg algebra such that:
\begin{itemize}
\item[(1)] $S\in \Perf A$;
\item[(2)] $A\in \thick_{A^e}(S^e)$.
\end{itemize}
Then $A$ is smooth over $k$.
\end{lem}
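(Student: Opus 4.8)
The plan is to reduce smoothness of $A$ — i.e.\ $A\in\thick^{A^e}(A^e)$ — to the two hypotheses by transporting the thick subcategory generation along base change functors. First I would record the input: hypothesis (2) says $A\in\thick_{A^e}(S)$ for some finite number of cones, so it suffices to show that $S$ itself is a perfect $A^e$-module, i.e.\ $S\in\thick^{A^e}(A^e)$; then thickness is transitive and we conclude. So the real content is proving $S\in\Perf A^e$.

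To get $S\in\Perf A^e$, I would use hypothesis (1) together with the separability of $S$. Consider the restriction-of-scalars along $\pi\colon A\to S$ and its adjoints. Since $S$ is separable over $k$, $S$ is a projective $S^e$-module, hence $S\in\Perf S^e$. Now $A^e = A^{\op}\otimes_k A$ maps to $S^e = S^{\op}\otimes_k S$ via $\pi^{\op}\otimes\pi$, and base change along this map sends $\Perf A^e$ into $\Perf S^e$ and conversely; more precisely, viewing $S$ as an $A$-$A$-bimodule via $\pi$ on both sides, we have $S \cong S\otimes_{S^e} S^e$ as $A^e$-modules (using the $A^e$-module structure induced by $\pi^{\op}\otimes\pi$ on $S^e$). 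Since $S\in\Perf S^e$ and $S^e = S^{\op}\otimes_k S$ is, as a right $A^e$-module, isomorphic to $(S\otimes_A A)^{\op}\otimes_k(S\otimes_A A) \cong S^{\op}\otimes_A A^e\otimes_A S$ — and both the left and right tensor factors $S$ are perfect over $A$ by hypothesis (1) — it follows that $S^e\in\Perf A^e$, and hence $S\in\Perf A^e$.

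Assembling the pieces: from $S\in\thick^{A^e}(A^e)$ and $A\in\thick^{A^e}(S)$ we obtain $A\in\thick^{A^e}(A^e) = \Perf A^e$, which is exactly the statement that $A$ is smooth over $k$. I would phrase the middle step carefully using the standard identifications of the bimodule bar/tensor constructions, perhaps citing the same toolkit used for Lemma~\ref{lem:2.5}: the key manipulations are that $-\otimes_k A$ and its relatives are exact and take thick subcategories to thick subcategories, so a perfect object on the source goes to a perfect object on the target with a controlled (and here irrelevant, since we only need existence) number of cones.

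\textbf{Main obstacle.} The delicate point is the bookkeeping of bimodule structures in showing $S\in\Perf A^e$: one must be careful that the $A^e$-module structure on $S$ coming from $\pi$ on both sides agrees with the one obtained by restricting the $S^e$-action along $\pi^{\op}\otimes\pi$, and that $S^e$, restricted to $A^e$, really is built from two copies of $S\in\Perf A$ in the appropriate (left/right) variances. This is the analogue of the observation $M\otimes_k A\cong M\otimes_A A^{\op}\otimes_k A$ used in Lemma~\ref{lem:2.5}, but applied with $M=S$ and on the ``two-sided'' level; once that identification is in place, perfection is immediate from hypothesis (1) and the fact that $\Perf$ is closed under $\otimes_k$. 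Separability of $S$ is used only to know $S\in\Perf S^e$ (so that $S$ is even a candidate for being pulled back), and connectivity/properness of $A$ plays no role beyond what is already packaged into the hypotheses.
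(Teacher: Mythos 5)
Your argument is correct and follows essentially the same route as the paper: use hypothesis (1) on both sides to see that $S^{\op}\otimes_k S$ is perfect over $A^e$, use separability to split $S$ off from $S^{\op}\otimes_k S$ (equivalently, restrict the perfect $S^e$-module $S$ along $A^e\to S^e$), and then conclude via hypothesis (2) and transitivity of thick subcategories. The paper phrases the middle step as ``$S$ is a summand of $S^{\op}\otimes_k S$ in $\sfD(A^e)$'' rather than via $S\in\Perf S^e$, but this is the same observation.
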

\begin{proof}
By assumption $S^\op\in \Perf A^\op$, i.e.\ $S^\op \in \thick^{A^\op}(A^\op)$ and so $S^\op\otimes_kS \in \thick^{A^e}(A^\op\otimes_k S)$. The base change functor
\begin{displaymath}
A^\op\otimes_k-\colon \sfD(A) \to \sfD(A^e)
\end{displaymath}
preserves compacts and so $A^\op\otimes_k S$ is perfect over $A^e$. Thus $S^e = S^\op\otimes_k S$ is perfect over $A^e$. Assumption (2) guarantees that $S^e$ builds $A$ over $A^e$ and so $A\in \Perf A^e$, i.e.\ $A$ is smooth.
\end{proof}

In the connective case (2) is easily checked.

\begin{lem}
Let $A$ be a proper connective dg algebra and set $S = \sfH^0(A)/\rad(\sfH^0(A))$. Suppose that $S$ is separable over $k$. Then in $\sfD(A^e)$ we have
\begin{displaymath}
A \in \thick_{A^e}(S^e).
\end{displaymath}
\end{lem}
\begin{proof}

As $S$ is separable over $k$ we have
\[
\rad \sfH^0(A^e) = \rad (\sfH^0(A))^e = \rad \sfH^0(A^\op) \otimes_k \rad \sfH^0(A)
\]
and hence
\[
\sfH^0(A^e)/\rad \sfH^0(A^e) \cong S^e.
\]
In other words $S^e$ is a sum of the simple $\sfH^0(A^e)$-modules (with each simple occurring) and so generates $\sfD^\mathrm{b}(H^0(A^e))$. It follows, using the standard t-structure, that $S^e$ generates $\sfD^\mathrm{b}(A^e)$, the category of dg $A^e$-modules with finite dimensional cohomology (cf.\ Proposition~\ref{prop:generator}). Since $A$ is proper, we have $A\in \sfD^\mathrm{b}(A^e)$ and we are done.
%
\end{proof}

\begin{thm}\label{thm:criterion}
Let $A$ be a proper connective dg algebra such that the radical quotient $S = \sfH^0(A)/\rad(\sfH^0(A))$ is separable over $k$. Then $A$ is smooth if and only if $S\in \Perf A$.
\end{thm}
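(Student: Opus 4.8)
The plan is to show both implications, noting that the forward direction is essentially immediate while the reverse direction is where the recent work pays off. For the forward implication, suppose $A$ is smooth. Then $A\in\Perf A^e$, and since $A$ is proper, restriction of scalars along the multiplication map $A^e\to A$ sends $\Perf A^e$ into $\Perf A$; applying this to $A$ itself shows $A\in\Perf A$, and since $S = \sfH^0(A)/\rad(\sfH^0(A))$ is built from $A$ in $\sfD(A)$ (indeed by the preceding lemma $S$ is a finite extension of shifts of $A$, or more directly $S$ has finitely presented cohomology over a regular $\sfH^*(A)$ by Proposition~\ref{prop:regular} and Proposition~\ref{prop:t}), we conclude $S\in\Perf A$. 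Alternatively, and more cleanly, one observes that smoothness plus properness forces $\Perf A = \sfD^{\mathrm{fp}}(A)$ as in Proposition~\ref{prop:t}, and $S$ visibly has finite-dimensional cohomology, hence lies in $\Perf A$.

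For the reverse implication, assume $S\in\Perf A$. The strategy is to apply Lemma~\ref{lem:Serre}: we must verify its two hypotheses. Hypothesis~(1), that $S\in\Perf A$, is precisely our standing assumption. Hypothesis~(2), that $A\in\thick_{A^e}(S)$ with the diagonal bimodule structures, is exactly the content of the lemma immediately preceding this theorem (the one using the DG-ideal $J_-$ and the filtration of $A$ by its powers). Since $S$ is assumed separable over $k$, all the hypotheses of Lemma~\ref{lem:Serre} are in place, and we conclude that $A$ is smooth over $k$.

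The only point requiring a little care is the interface between the two auxiliary lemmas and Lemma~\ref{lem:Serre}: Lemma~\ref{lem:Serre} is stated for an augmented DG-algebra $A\xrightarrow{\pi}S$, so I would first note that $A$ does admit such an augmentation onto $S = \sfH^0(A)/\rad(\sfH^0(A))$ — namely the composite $A\to\sfH^0(A)\to\sfH^0(A)/\rad(\sfH^0(A))$, which is a DG-algebra map since $A$ is connective and concentrated in non-positive degrees so that $A\to\sfH^0(A)$ is a map of DG-algebras. One should also record that reducing to the finite-dimensional case via Corollary~\ref{cor:fd} (as in the preceding lemma) is harmless because smoothness is invariant under derived Morita equivalence; this is already implicitly used in the statement of the lemma we are citing, so I would simply cite it.

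I do not expect a genuine obstacle here: the theorem is a packaging of Lemma~\ref{lem:Serre} together with its connective-case refinement, and the proof is therefore short. The one thing I would double-check when writing the final version is that the separability hypothesis on $S$ is used only where Lemma~\ref{lem:Serre} needs it (to split $S$ off from $S^{\op}\otimes_k S$ over $A^e$), and that no additional hypotheses beyond ``$A$ proper connective'' and ``$S$ separable'' are silently required by the auxiliary lemma on $\thick_{A^e}(S)$ — a quick inspection confirms that lemma needs only properness and connectivity of $A$.
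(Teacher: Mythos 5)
Your proposal is correct and follows the paper's argument: the ``if'' direction is exactly the paper's (verify hypothesis (2) of Lemma~\ref{lem:Serre} via the preceding lemma on $A\in\thick_{A^e}(S)$, take hypothesis (1) as the standing assumption, and use separability of $S$), and your ``alternatively, and more cleanly'' argument for the ``only if'' direction is precisely what the paper does via Proposition~\ref{lem:3} (equivalently Proposition~\ref{prop:t}), since properness makes $\sfH^*(A)$ finite dimensional, hence noetherian with $\sfH^{\leq-1}(A)$ finitely generated, and $S$ is a finitely presented $\sfH^0(A)$-module. One caveat: your \emph{first} attempt at the forward direction is wrong as stated --- the preceding lemma shows that $S$ builds $A$ (over $A^e$), not that $A$ builds $S$ in finitely many steps, so the claim that ``$S$ is a finite extension of shifts of $A$'' is backwards and, if repaired to mean $S\in\thick^A(A)$, would be circular; you should delete that sentence and keep only the representability/$\sfD^{\mathrm{fp}}$ argument.
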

\begin{proof}
By the previous lemma condition (2) of Lemma~\ref{lem:Serre} is satisfied. Thus provided (1) holds, i.e.\ $S$ is perfect over $A$, we have that $A$ is smooth.

On the other hand, if $A$ is smooth then $S$ is perfect by Proposition~\ref{lem:3}.
\end{proof}


\subsection{Regularity of the dual}

We conclude the section with a couple of comments on $\sfD^\b(A)$ for a proper connective dg algebra $A$. We begin by showing that $\sfD^\b(A)$ is always regular.

We let $A$ be a finite dimensional dg algebra and denote by $J$ the Jacobson radical of the underlying graded algebra. We set $J_- = \{a\in J\mid d(a)\in J\}$ as in \cite{orlov2019finite}. This is a two-sided nilpotent dg ideal of $A$. We set $J_+ = J + d(J)$, which is also a dg ideal such that $A/J_+$ is semisimple as a graded algebra. By \cite{orlov2019finite}*{Lemma~2.4} the natural map $A/J_- \to A/J_+$ is a quasi-isomorphism of dg algebras.

By $\sfD^\b(A)$ we mean the derived category of dg-$A$-modules with finite dimensional cohomology (or equivalently, the derived category of finite dimensional dg modules).

\begin{lem}\label{lem:Dbreg}
Let $A$ be a finite dimensional dg algebra (not necessarily connective) over $k$. Then $\sfD^\b(A)$ is regular.
\end{lem}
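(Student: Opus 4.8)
The plan is to reduce the regularity of $\sfD^\b(A)$ to the semisimple case by dévissage along the radical filtration, exactly paralleling the bimodule argument used earlier for smoothness. First I would observe that $\sfD^\b(A)$ is generated (as a thick subcategory) by the simple DG-modules: every finite dimensional DG-module has a finite filtration with semisimple subquotients, so it lies in $\thick(S)$ where $S = A/J_+$ is the (quasi-isomorphic image of the) semisimple quotient $A/J_-$. Thus it suffices to bound uniformly the number of cones needed to build each simple from a single compact generator, and to exhibit such a generator.

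The key point is that $A/J_-$ is quasi-isomorphic to the semisimple graded algebra $A/J_+$, so as a DG-algebra $S$ is (up to quasi-isomorphism) a product of matrix algebras over division rings concentrated in degree zero; in particular $\sfD^\b(S)$ is semisimple, every object is a finite sum of shifts of simples, and $S$ itself strongly generates $\sfD^\b(S)$ in, say, one step. Restriction of scalars along $A \to A/J_- \to S$ (equivalently, viewing $S$-modules as $A$-modules) sends $\sfD^\b(S)$ into $\sfD^\b(A)$; then I would run the filtration argument quantitatively. If $J_-^{n}=0$ on the nose for some fixed $n$ (which holds since $J_-$ is nilpotent and $A$ is finite dimensional), then for any finite dimensional DG-module $M$ the $J_-$-adic filtration $M \supseteq MJ_- \supseteq \cdots \supseteq MJ_-^{n}=0$ has at most $n$ steps, each subquotient being a DG-module over $A/J_-$, hence — after pushing to $S$ — a finite sum of shifts of simples. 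This writes $M$ as built from the simple modules in at most $n$ cones, and the simples are built from a fixed generator (e.g. $\bigoplus S_i$, or the image of $S$) in a bounded number of steps. Combining the two bounds gives a uniform $N$ with $\sfD^\b(A) = \thick_N(G)$ for a single compact $G$, which is exactly regularity of $\sfD^\b(A)$.

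The main obstacle I anticipate is bookkeeping the restriction-of-scalars step carefully: one must check that the subquotients of the $J_-$-filtration really are DG-modules over $A/J_-$ (so that their images under the quasi-isomorphism $A/J_- \to A/J_+ = S$ are honest $S$-modules in $\sfD^\b(A)$), and that $\sfD^\b(S)$ is genuinely semisimple as claimed — this uses that $S$ is concentrated in degree zero and semisimple there, so there are no nontrivial extensions or higher $\Ext$s and $\thick_1(S) = \sfD^\b(S)$. A secondary subtlety is making sure the generator $G$ we use is compact in $\sfD^\b(A)$ and not merely an object of $\sfD^\b(A)$; taking $G = A$ itself, which lies in $\sfD^\b(A)$ since $A$ is finite dimensional, and then bounding $\thick_m(A) \ni S$ via the radical filtration of $A$ as a module over itself, sidesteps this and is essentially the same computation run one level down. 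Once these two filtration lengths are identified as finite constants depending only on $A$, the conclusion is immediate.
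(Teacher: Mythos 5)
Your argument is correct and is essentially the paper's proof: dévissage of an arbitrary finite dimensional DG-module along its $J_-$-adic filtration, whose subquotients are DG-modules over $A/J_-\simeq A/J_+$ and hence finite sums of shifts of summands of the semisimple quotient, yielding the uniform bound $n$ with $J_-^n=0$. Two minor remarks: the graded semisimple algebra $A/J_+$ need not be concentrated in degree zero (though the decomposition of its finite dimensional DG-modules into shifts still holds, which is all you use), and regularity of $\sfD^\b(A)$ only asks for a strong generator lying in $\sfD^\b(A)$ --- the paper simply takes $A/J_-$ itself --- so your final step building the simples from $A$ is not needed.
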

\begin{proof}
Let $n$ be the least integer such that $J_-^n =0$ but $J_-^{n-1}\neq 0$. Given any finite dimensional dg module $X$ consider the filtration (in the abelian category of dg modules)
\begin{displaymath}
0 = J_-^{n}X \subseteq J_-^{n-1}X \subseteq J_-^{n-2}X \subseteq \cdots \subseteq J_-X \subseteq X.
\end{displaymath}
There are short exact sequences
\begin{displaymath}
0 \to J_-^{i}X \to J_-^{i-1}X \to F_{i-1} \to 0
\end{displaymath}
where $F_{i-1}$ is naturally an $A/J_-$-module. In particular, since $A/J_- \cong A/J_+$  and the latter is semisimple as an algebra, we deduce from \cite{orlov2019finite}*{Proposition~2.16} that 
\begin{displaymath}
F_{i-1} \in \add(\Sigma^i A/J_- \mid i\in \ZZ).
\end{displaymath}
Each short exact sequence of dg modules gives a triangle in $\sfD(A)$ and so the filtration we have constructed exhibits $X$ as an object of $\thick_n(A/J_-)$ (cf.\ the start of Section~\ref{ssec:sir} for this notation).
\end{proof}

Let $k\to \ell$ be a field extension. Because being finite dimensional is preserved by base change so is the regularity we have just observed.

\begin{prop}
Let $A$ be a finite dimensional dg algebra. Given a field extension $k\to \ell$ let $A_\ell$ denote the base change $A\otimes_k \ell$ of $A$ to $\ell$. Then $\sfD^\b(A_\ell)$ is regular, i.e. $\sfD^\b(A)$ is ``geometrically regular''.
\end{prop}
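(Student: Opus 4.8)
The plan is to reduce the statement to Lemma~\ref{lem:Dbreg} by observing that base change to a field extension preserves the only hypothesis that matters, namely finite-dimensionality. First I would note that if $A$ is a finite dimensional DG-algebra over $k$, then $A_\ell = A\otimes_k\ell$ is a finite dimensional DG-algebra over $\ell$: the underlying graded vector space $A\otimes_k\ell$ is finite dimensional over $\ell$ since $A$ is finite dimensional over $k$, and the differential and multiplication are extended $\ell$-linearly, so $A_\ell$ is again a bona fide finite dimensional DG-algebra over the field $\ell$. Crucially, \emph{no connectivity or properness beyond finite-dimensionality is used} in Lemma~\ref{lem:Dbreg}; it applies to any finite dimensional DG-algebra over any field.

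Given this, the proof is essentially one line: apply Lemma~\ref{lem:Dbreg} with $k$ replaced by $\ell$ and $A$ replaced by $A_\ell$, concluding that $\sfD^\b(A_\ell)$ is regular. Since this holds for \emph{every} field extension $k\to\ell$, the category $\sfD^\b(A)$ is geometrically regular in the sense indicated. I would write something like: ``Since $A$ is finite dimensional over $k$, the base change $A_\ell = A\otimes_k\ell$ is finite dimensional over $\ell$. Applying Lemma~\ref{lem:Dbreg} to the finite dimensional DG-$\ell$-algebra $A_\ell$ yields that $\sfD^\b(A_\ell)$ is regular, as claimed.''

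There is essentially no main obstacle here --- this proposition is a formal corollary. If one wanted to be more careful, the only point worth a sentence is confirming that the bound on the number of cones in Lemma~\ref{lem:Dbreg}, while depending on $A_\ell$ (specifically on the nilpotency index of $(J_-)_\ell$), is genuinely finite for each fixed $\ell$; regularity only requires the existence of \emph{some} finite bound for each $A_\ell$ separately, not a bound uniform in $\ell$, so there is nothing to check beyond invoking the lemma. Thus the entire proof is the observation that finite-dimensionality is stable under base change along field extensions, combined with a citation of the preceding lemma.
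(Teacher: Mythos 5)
Your proof is correct and is exactly the paper's argument: the paper's entire proof reads ``Clearly $A_\ell$ is finite dimensional over $\ell$ so Lemma~\ref{lem:Dbreg} applies.'' Your additional remarks about the bound depending on $\ell$ are accurate but not needed.
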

\begin{proof}
Clearly $A_\ell$ is finite dimensional over $\ell$ so Lemma~\ref{lem:Dbreg} applies.
\end{proof}

Let $A$ be a finite dimensional dg algebra. We always get an augmentation $\pi\colon A\to S_+ = A/J_+$. We denote by $\DA$ the dual dg algebra $\RHom_A(S_+, S_+)$ and note that $\Perf \DA \cong \sfD^\b(A)$ so $\DA$ is regular. We remark that $\DA$ is not proper if $A$ is not smooth and it is coconnective. By Lemma~\ref{lem:Dbreg} it is regular.

\begin{lem}\label{lem:ereg}
If $S_+$ is separable over $k$ then $\DA^e$ is regular.
\end{lem}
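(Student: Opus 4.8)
The plan is to bootstrap regularity of $\DA^{e}$ from regularity of $\DA$ together with the already-established machinery of radical filtrations for bimodules. Recall $\DA = \RHom_A(S_+,S_+)$ is coconnective with $\sfH^0(\DA) = S_+$ (via Yoneda), so $\DA^e = \DA^{\op}\otimes_k\DA$ is again coconnective with $\sfH^0(\DA^e) = S_+^{\op}\otimes_k S_+$, which is semisimple (indeed separable) since $S_+$ is separable over $k$. The key observation is that regularity of $\sfD^{\b}$ for a \emph{coconnective} finite-type DG-algebra can be detected the same way Lemma~\ref{lem:Dbreg} detects it in the finite-dimensional case: one filters an arbitrary bounded-cohomology module by powers of the radical ideal and checks the subquotients are built in a uniformly bounded number of steps from the semisimple quotient. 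So first I would record that $\Perf\DA^e \cong \sfD^\b(A^e)$ — equivalently that $\DA^e \cong \RHom_{A^e}(S_+^{\op}\otimes_k S_+, S_+^{\op}\otimes_k S_+)$ — using that $\RHom$ commutes with the relevant tensor products over the field $k$ and that $S_+^{\op}\otimes_k S_+ = S_+\otimes_k S_+$ (up to the obvious identification) is the appropriate semisimple quotient of $A^e$.

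Next I would verify that $A^e = A^{\op}\otimes_k A$ is itself a finite-dimensional DG-algebra (immediate, since $A$ is finite dimensional and $k$ is a field), so that Lemma~\ref{lem:Dbreg} applies verbatim to conclude $\sfD^\b(A^e)$ is regular. The only genuine content is then the dictionary $\Perf\DA^e \cong \sfD^\b(A^e)$: one needs that $A^{e}$ has a semisimple quotient $A^e/(J_+)^e$ with $(J_+)^e$ the appropriate two-sided DG-ideal, that this quotient is separable (a tensor product of separable algebras over a field is separable, using that $S_+$ is separable), and that the dual DG-algebra of $A^e$ along this augmentation agrees with $\DA^e$. The last point follows from the standard compatibility $\RHom_{A^{\op}\otimes_k A}(M\otimes_k N, M\otimes_k N)\cong \RHom_{A^{\op}}(M,M)\otimes_k\RHom_A(N,N)$ when everything in sight is perfect over $k$, which holds because $A$ is proper over the field $k$ and $S_+$ is a perfect $k$-module.

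Putting these together: $\DA^e \cong \RHom_{A^e}(S_+\otimes_k S_+, S_+\otimes_k S_+)$, hence $\Perf\DA^e \cong \sfD^\b(A^e)$, and the latter is regular by Lemma~\ref{lem:Dbreg} applied to the finite-dimensional DG-algebra $A^e$; therefore $\DA^e$ is regular.

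I expect the main obstacle to be the bookkeeping in the identification $\DA^e \cong \RHom_{A^e}(S_+^{\op}\otimes_k S_+, S_+^{\op}\otimes_k S_+)$ and, more precisely, checking that the augmentation $A^e \to S_+^{\op}\otimes_k S_+$ is the quotient by the ideal $J_+^{A^e}$ appearing in Lemma~\ref{lem:Dbreg} — one must make sure that the radical-type ideal $J_+$ of $A^e$ really is $(J_+\otimes_k A) + (A\otimes_k J_+)$ up to the subtleties of how $J_-$ versus $J_+$ behave under tensor products, and that the target is separable (not merely semisimple as a graded algebra) so that $\sfD^\b(A^e)$ being regular transfers to $\DA^e$. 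Everything else — finiteness of $A^e$, the Morita-style equivalence $\Perf\DA \cong \sfD^\b(A)$, and the behaviour of $\RHom$ under $-\otimes_k-$ for perfect objects — is routine once these identifications are in place.
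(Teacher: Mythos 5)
Your proposal is correct and follows essentially the same route as the paper: identify $\DA^e$ with $\RHom_{A^e}(S_+\otimes_k S_+^\op, S_+\otimes_k S_+^\op)$ via the K\"unneth formula, use separability of $S_+$ to recognise $S_+\otimes_k S_+^\op$ as the semisimple quotient $A^e/\rad(A^e)_+$, and then apply Lemma~\ref{lem:Dbreg} to the finite-dimensional DG-algebra $A^e$ together with the derived Morita equivalence $\Perf\DA^e \cong \sfD^\b(A^e)$. The point you flag as a potential obstacle (that the augmentation $A^e\to S_+^\op\otimes_k S_+$ is the quotient by the correct radical-type ideal) is exactly where the paper also invokes separability, so your assessment of where the content lies is accurate.
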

\begin{proof}
There are identifications
\begin{align*}
\DA^e &= \RHom_A(S_+, S_+) \otimes_k \RHom_{A^\op}(S_+^\op, S_+^\op) \\
&\cong \RHom_{A^e}(S_+ \otimes_k S_+^\op, S_+\otimes_k S_+^\op) \\
&\cong \RHom_{A^e}(A^e/\rad(A^e)_+, A^e/\rad(A^e)_+)
\end{align*}
where the first isomorphism is the K\"unneth formula and the final one follows from separability.

Now $A^e$ is still finite dimensional so by Lemma~\ref{lem:Dbreg} $\sfD^\b(A^e)$ is regular. This category is derived Morita equivalent to $\RHom_{A^e}(A^e/\rad(A^e)_+, A^e/\rad(A^e)_+)$ proving the claim.
\end{proof}

Thus one could use representability techniques to prove that $\DA$ is smooth by proving that $\sfH^*(\DA)$ is sufficiently small as a $\sfH^*(\DA)^e$-module. However, it seems preferable to prove smoothness via another route and deduce this fact.



\section{Through the looking glass}
\label{sec:looking}
In this section we take a departure, at least literally, from the world of connective dg algebras; our dg algebras are not assumed to be connective. 

A dg algebra $\Gamma$ is called \textit{coconnective} if $\sfH^i(\Gamma)=0$ for $i<0$. A coconnective dg algebra $\Gamma$ is said to be \textit{simply connected} if $\sfH^1(\Gamma) = 0$. It is standard in many situations that there is a duality between connective dg algebras and coconnective simply connected dg algebras \cite{avramov1986through}. Thus using Theorem \ref{th:motives} we can also compute the noncommutative motive of many coconnective and simply connected dg algebras. We begin with some preparation in the setting we've worked in thusfar. 

Let $A$ be a proper connective dg algebra, $B=\sfH^0(A)$, and $\pi:A \to B$ the quotient morphism. Then we have an adjunction
\begin{equation}
\begin{tikzcd}
\pi^*=-\otimes_{A} B:\sf{D}(A) \ar[shift left]{r} & \sf{D}(B):-\otimes_{B} B_{A}=\pi_* \ar[shift left]{l}
\end{tikzcd}
\end{equation}

\begin{prop}\label{prop:generator}
The smallest localizing subcategory of $\sf{D}(A)$ containing $\pi_*B$ coincides with $\sf{D}(A)$:
\begin{equation}
\label{eq:loc}
\loc^{A}(\pi_*B)=\sf{D}(A).
\end{equation}
In other words, $\pi_*B$ generates $\sf{D}(A)$.
\end{prop}
\begin{proof}
Since $B$ is a (compact) generator for $\sf{D}(B)$, and the $\sfH^i(A)$ are $B$-modules, we have
\begin{equation}
\label{eq:hi}
\sfH^i(A) \in \loc^{B}(B) = \sf{D}(B) \text{ for each } i\in \ZZ.
\end{equation}
Since $\pi_*$ is exact and coproduct preserving it follows that
\begin{equation}
\pi_*\sfH^i(A) \in \loc^{A}(\pi_*B) \text{ for each } i\in \ZZ.
\end{equation}
As $A$ is proper, there is a maximal $n$ such that $\sfH^{-n}(A)\neq 0$, and it follows that the morphism
\begin{equation}
\tau^{\leq -n}A \to \pi_*\sfH^{-n}(A)
\end{equation}
in $\sf{D}(A)$ is a quasi-isomorphism. By induction, we can then build $\tau^{\leq -i}A$ using $\tau^{\leq -i-1}A$ and $\pi_*\sfH^{-i}(A)$. As $A$ is also bounded above, this shows that one can build $A$ from the $\pi_*\sfH^{-i}(A)$, and by \eqref{eq:hi} we hence obtain
\begin{equation}
A \in \loc^{A}(\pi_*B).
\end{equation}
Since $\loc^{A}(A)=\sf{D}(A)$, we obtain \eqref{eq:loc}.
\end{proof}

\begin{cor}\label{cor:cg}
Let $A$ be as above and assume in addition that $A$ is smooth. Then $\pi_*B$ is a compact generator for $\sf{D}(A)$. In particular, 
\begin{equation}
\thick^{A}(\pi_*B) = \Perf A.
\end{equation}
\end{cor}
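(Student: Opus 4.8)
The plan is to deduce everything from the preceding proposition together with the smoothness hypothesis. The preceding proposition already gives $\loc^A(\pi_*B) = \sfD(A)$, so $\pi_*B$ is a generator for $\sfD(A)$; it remains only to show that $\pi_*B$ is a \emph{compact} object of $\sfD(A)$. Once compactness is established, the general theory of compactly generated triangulated categories (Neeman) gives that a compact generator $G$ satisfies $\thick^A(G) = (\sfD(A))^c = \Perf A$, which is exactly the displayed equality.

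So the crux is compactness of $\pi_*B$. First I would recall that $\pi_*$ is the forgetful-type functor $-\otimes_B B_A$, where $B_A$ is $B$ viewed as a DG $A$-module via $\pi$, and that $\pi_*X \cong X$ as an object of $\sfD(A)$ with $A$ acting through $\pi$. The key input is that $A$ is smooth \emph{and} proper: proper means $B = \sfH^0(A)$ has finite dimensional total cohomology, hence in particular $\sfH^*(A)$ is coherent (it's finite dimensional) and $\sfH^{\leq -1}(A)$ is a finitely generated ideal (again, finite dimensionality). This means the hypotheses of Proposition~\ref{prop:t} are in force (working over a field, which has finite global dimension), so $\Perf A = \sfD^{\mathrm{fp}}(A)$, the DG-modules with finitely presented — here just finite dimensional — cohomology over $\sfH^*(A)$. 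Now $\pi_*B = B$ has cohomology concentrated in degree $0$ equal to $B$, which is finitely generated (indeed finite dimensional) over $\sfH^*(A)$ since $B = \sfH^0(A)$ is a quotient ring of $\sfH^*(A)$. Hence $\pi_*B \in \sfD^{\mathrm{fp}}(A) = \Perf A$, i.e.\ $\pi_*B$ is compact.

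With both facts in hand the argument concludes cleanly: $\pi_*B$ is a compact generator of the compactly generated category $\sfD(A)$, so the smallest thick subcategory containing it is precisely the subcategory of compact objects, namely $\Perf A$. That gives $\thick^A(\pi_*B) = \Perf A$ as claimed, and the ``in particular'' clause is simply the restatement that $\pi_*B$ is a compact generator.

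I expect the only real point requiring care to be the verification of compactness of $\pi_*B$; the generation statement is already done in the preceding proposition, and the passage from ``compact generator'' to ``$\thick = \Perf$'' is a standard fact. One should double-check that Proposition~\ref{prop:t} genuinely applies — i.e.\ that smoothness plus properness of $A$ forces $\sfH^*(A)$ coherent and $\sfH^{\leq -1}(A)$ finitely generated — but since properness makes $\sfH^*(A)$ finite dimensional over $k$ this is immediate. Alternatively, one can argue compactness of $\pi_*B$ directly: the proof of the preceding proposition builds $A$ from the $\pi_*\sfH^{-i}(A)$ via finitely many triangles (since $A$ is bounded), each $\sfH^{-i}(A)$ is a finite dimensional $B$-module hence built from finitely many copies of $B$, so $A \in \thick^A(\pi_*B)$; combined with $\thick^A(A) = \Perf A$ and the fact that $\pi_*B$ is perfect (being finite dimensional, hence in $\sfD^{\mathrm{fp}}(A) = \Perf A$) this again yields the claim. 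Either route avoids any delicate computation.
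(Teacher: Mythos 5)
Your proposal is correct and follows essentially the same route as the paper: generation comes from the preceding proposition, compactness of $\pi_*B$ comes from the representability results of Section~5 (the paper cites Proposition~\ref{lem:3}, which is proved by the same argument as Proposition~\ref{prop:t} that you invoke, and properness indeed supplies the coherence and finite-generation hypotheses), and the identification $\thick^A(\pi_*B)=\Perf A$ is the standard consequence of compact generation (Thomason--Neeman). No gaps.
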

\begin{proof}
We have just seen that $\pi_*B$ is a generator for $\sf{D}(A)$. It follows from Proposition~\ref{lem:3} that $\pi_*B$ is perfect over $A$. Thus $\pi_*B$ is a compact generator for $\sf{D}(A)$. The final statement is then a consequence of Thomason's localization theorem \cite[Theorem 2.1.3]{NeeGrot}. 
\end{proof}

\begin{cor}
\label{cor:coco}
With notation and hypotheses as above, i.e.\ $A$ is smooth, proper, and connective, then $\Perf A$ is equivalent to $\Perf \Gamma$, where 
\begin{equation}
\Gamma=\RHom_{\Perf A}(\pi_*B,\pi_*B)
\end{equation}
is a simply connected, coconnective dg algebra, with 
\begin{equation}
\sfH^0(\Gamma) = \sfH^0(A) = B.
\end{equation}
\end{cor}
\begin{proof}
By Corollary~\ref{cor:cg} $\pi_*B$ is a compact generator for $\sf{D}(A)$ and so the equivalence between $\Perf A$ and $\Perf \Gamma$ is immediate; it remains to verify that $\Gamma$ has the claimed properties. However, these are also immediate: the fact that the standard t-structure on $\Perf A$ has heart $\modu B$ (see Proposition \ref{prop:t}), and this is compatible with $\pi_*$, guarantees that
\begin{equation}
\sfH^0(\Gamma) = \Hom_{\sf{D}(A)}(\pi_*B, \pi_*B) \cong \Hom_{\sf{D}(B)}(B,B) \cong B 
\end{equation}
and
\begin{equation}
 \sfH^1(\Gamma) = \Hom_{\sf{D}(A)}(\pi_*B, \Sigma \pi_*B) \cong \Hom_{\sf{D}(B)}(B,\Sigma B) = 0.
\end{equation}
\end{proof}

Smoothness and properness are derived Morita invariant and so $\Gamma$ is again smooth and proper and, moreover, has the same noncommutative motive as $A$. Thus we obtain a description for the noncommutative motive of a number of simply connected coconnective dg algebras. The difficulty is in recognizing when our calculation applies to such a dg algebra; being augmented over $B=\sfH^0(\Gamma)$ is not automatic in general. 

\begin{ex}
Let $X$ denote a Burniat surface (we need to assume $k$ is algebraically closed of characteristic $\neq 2$). It was shown in \cite[Theorem 4.12]{alexeev2013derived} that there is a semi-orthogonal decomposition
\begin{equation}
\sf{D}^b(X)=\langle \mathcal{A},\mathcal{B}\rangle,
\end{equation}
where $\mathcal{A}=\langle \mathcal{L}_1, \ldots, \mathcal{L}_6\rangle$ is generated by an exceptional collection of line bundles $\mathcal{L}_i$ on $X$, and $\mathcal{B}$ is a quasi-phantom category: it has vanishing Hochschild homology and finite Grothendieck group. The category $\mathcal{A}$ is equivalent to the derived category of the dg-endomorphism algebra $A$ of $\bigoplus_{i=1}^6\mathcal{L}_i$ which is computed explicitly in \cite[Proposition 4.10]{alexeev2013derived}. It turns out $A$ is formal, and is of the form
\begin{equation}
\begin{tikzcd}[row sep=.2in,column sep=.6in]
\mathcal{L}_1 \ar["{3}" description]{rrd} \ar{r} \ar{rd} \ar{rdd} & \mathcal{L}_3 \ar["{2}" description]{rd} & \\
& \mathcal{L}_4 \ar["{2}" description]{r} & \mathcal{L}_6 \\
\mathcal{L}_2 \ar["{3}" description]{rru} \ar{ru} \ar{r} \ar{ruu} & \mathcal{L}_5 \ar["{2}" description]{ru} &
\end{tikzcd}
\end{equation}
with labels indicating the number of arrows. All arrows have degree $2$ and all non-trivial composites vanish, i.e.\ it is an algebra of radical square zero. So $A$ is smooth, proper, coconnective, and simply connected. Since $A^0 \cong k^6$ is semisimple, Corollary \ref{cor:coco} applies. Of course in this case knowledge of the noncommutative motive also follows immediately from the fact that $\mathcal{A}$ has a full exceptional collection.
\end{ex}



\section{Smoothness of $\sfH^0$}
\label{sec:h0}
Let us now make some comments on $\sfH^0(A)$ of a smooth, proper, and connective dg algebra; this is a point which bamboozled the authors for some time, and so we feel it is worth highlighting.

As some motivation, let us note that when $\sfH^0(A)$ is smooth one can avoid the technicalities of Section~\ref{sec:georeal} via the following special case of Theorem~\ref{thm:main}.

\begin{prop}\cite[Corollary 3.5]{Maaseik}
\label{thm:geometric}
For a smooth, proper, and connective dg algebra $A$ such that $\sfH^0(A)$ is smooth, there exists a fully faithful exact functor 
\begin{equation}
\label{eq:ffexact}
\Perf A \to \sf{D}^b(\coh(X)),
\end{equation} 
with left and right adjoints, for some smooth projective scheme $X$ with a full exceptional collection.
\end{prop}
\begin{proof}
By the discussion in \cite[Remark 3.6]{Maaseik}, smoothness of $\sfH^0(A)$ ensures one can apply Corollary 3.5 in loc.\ cit.\ to get a fully faithful exact functor \eqref{eq:ffexact} for some smooth projective scheme $X$. Though it is not mentioned there, $X$ can be chosen to admit a full exceptional collection. Indeed, the construction shows there is a fully faithful exact functor 
\begin{equation}
\Perf A \to \sf{T}=\langle \Perf \sf{H}^0(A),\ldots, \Perf \sf{H}^0(A)\rangle,
\end{equation} 
for some triangulated category $\sf{T}$ admitting a semi-orthogonal decomposition as indicated. By applying \cite[Remark 5.6 and Theorem 5.8]{MR3545926} there is a fully faithful exact functor
\begin{equation}
\sf{T} \to \sf{D}^b(\coh(X)),
\end{equation}
for some smooth projective scheme $X$ with a full exceptional collection. The composition of both functors gives \eqref{eq:ffexact}. The existence of left and right adjoints then follows from \cite[Propositions 3.17 and 3.24]{MR3545926}.
\end{proof}

Let us now examine smoothness of $\mathsf{H}^0(A)$ in more detail. First of all, for an arbitrary smooth and connective dg algebra $A$, it is easy to see that $\sfH^0(A)$ is not necessarily smooth.

\begin{ex}
\label{ex:dualnumbers}
Let $A=k\langle x,y \rangle$, with $|x|=0, |y|=-1$, and such that the differential is determined by $d(y)=x^2$. Then $A$ is smooth (since it is semi-free) and connective, but $\sfH^0(A)=k[x]/(x^2)$ is not smooth. 
\end{ex}

In the previous example, the dg algebra $A$ is not proper. Constructing smooth, proper and connective dg algebras with non-smooth $\mathsf{H}^0$ seems to be more subtle. However, contrary to what one might hope, examples do occur.

\begin{ex}
In \cite[\S 6.3]{MR3748359}, a smooth 15-dimensional algebra $\Lambda$ is presented, along with a 2-term silting complex $\mathbf{P}$. Hence, we get an equivalence 
\begin{equation}
\Perf \Lambda \simeq \Perf A,
\end{equation}
where $A=\RHom_{\Lambda}(\mathbf{P},\mathbf{P})$ is the dg endomorphism algebra of $\mathbf{P}$. In particular $A$ is smooth, proper and connective. In loc.\ cit.\ it is furthermore shown that $\sfH^0(A)=\End_{\sf{D}^b(\Lambda)}(\mathbf{P})$ is not smooth.
\end{ex}

We now show that not every finite dimensional algebra can occur as $\sf{H}^0$ of a smooth proper connective dg algebra. In the simplest case, if $\sfH^0(A) \cong k$, then not only is the noncommutative motive of $A$ isomorphic to $U(k)$, but the dg algebra is in fact $k$ in disguise. 

\begin{prop}\label{prop:notlocal}
If $A$ is smooth and proper, $\sfH^0(A) \cong k$ and $A$ is either connective, or coconnective, then $A$ is quasi-isomorphic to $k$.
\end{prop}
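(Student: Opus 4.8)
The plan is to leverage the strong structural results already established: by Corollary~\ref{cor:fd} we may assume $A$ is finite dimensional on the nose, and (in the connective case) we have the minimal $A_\infty$-model $(\A, m_*)$ on $\sfH^*(A)$ with $\Perf A \cong \Perf \A$, all equivalences being Fourier--Mukai. Since $\sfH^0(A) \cong k$, the algebra is automatically basic, and the radical $J = \rad(\A)$ satisfies $\A = k\oplus J$ with $J = \A^{<0}$ concentrated in strictly negative degrees (in the coconnective case, strictly positive degrees). The heart of the argument is that smoothness forces $J = 0$, so that $\A \cong k$, and then transporting back along the chain of quasi-isomorphisms/Morita equivalences gives $A \simeq k$.

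First I would handle the connective case. Here $\A^0 = k$ and $\A^{<0} = J$, so the simple module $S = k$ is the residue field, and by Theorem~\ref{thm:criterion} (with $S$ trivially separable) smoothness is equivalent to $S \in \Perf A$. But one can compute $\Ext^*_A(S,S) = \sfH^*(\RHom_A(S,S))$: using the bar-type (or Koszul) resolution of $S$ over the $A_\infty$-algebra $\A$, the self-$\Ext$ algebra has underlying space the tensor algebra on $(J[1])^\vee$ with its induced (co)multiplications; since $J$ is concentrated in negative degrees, $(J[1])^\vee$ sits in positive degrees $\geq 2$... wait — more carefully, $S$ perfect over $A$ means $\dim_k \Ext^*_A(S,S) < \infty$, and a standard argument (e.g. via the minimal model / Adams grading) shows that for a connective proper DG-algebra with simple $S$ and radical $J = \A^{<0}$, finiteness of $\Ext^*_A(S,S)$ forces $J=0$: the generators of the $\Ext$-algebra in each negative internal degree contribute infinitely many cohomological degrees unless there are none. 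Alternatively, and perhaps more cleanly, one argues that if $A \not\simeq k$ then $\sfH^{<0}(A)\neq 0$, pick the top nonzero degree $-n$; then $S = k$ has infinite projective dimension over $A$ because the minimal free resolution of $S$ cannot terminate (the map $A \to S$ has kernel involving $A^{<0}$, and killing it requires syzygies in ever lower degree), contradicting $S\in \Perf A$. I expect the cleanest route is to show directly that $S\in\Perf A$ together with properness forces the minimal semifree resolution of $S$ to be finite, and then a degree/length count on $\A = k\oplus \A^{<0}$ shows $\A^{<0}=0$.

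For the coconnective case, $\DA$-type duality is the natural tool: $A$ coconnective with $\sfH^0(A) = k$ means $A$ is (up to quasi-isomorphism) augmented with augmentation ideal in positive degrees, and one can pass to the Koszul dual $B = \RHom_A(k,k)$, which is connective; smoothness and properness of $A$ transfer to properties of $B$ allowing the connective argument (or a direct dual version of it) to apply, forcing $B \simeq k$ and hence $A \simeq k$ by Koszul duality for the simple module. The main obstacle I anticipate is making the ``infinite projective dimension'' / degree-counting argument fully rigorous in the $A_\infty$ setting — in particular ensuring that no cancellation in higher multiplications $m_n$ can artificially truncate the resolution of $S$; this is precisely where the connectivity hypothesis is essential (cf.\ Remark~\ref{rem:efi}), since it rules out the pathological phenomena that occur for general finite-dimensional $A_\infty$-algebras, and I would expect to invoke the minimality of the model together with the fact that $J = \A^{<0}$ to pin down the internal gradings.
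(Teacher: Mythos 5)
Your first step is the same as the paper's: smoothness yields $k=\sfH^0(A)\in\Perf A$ (the paper gets this from Proposition~\ref{lem:3}/Corollary~\ref{cor:cg}; your appeal to Theorem~\ref{thm:criterion} is the same fact). The genuine gap is the second step. You need: if $A$ is connective, proper, $\sfH^0(A)\cong k$ and $k\in\Perf A$, then $\sfH^{<0}(A)=0$. You offer this as a ``standard degree/length count'' on a minimal resolution of $k$, and you yourself flag that you cannot rule out cancellations coming from the higher multiplications $m_n$. That worry is exactly where the content lies: this implication is J{\o}rgensen's amplitude inequality (every nonzero perfect complex over a connective local DG-algebra has amplitude at least that of $A$), which is a genuine theorem with a nontrivial proof, not a routine count. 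The paper simply cites it (\cite{jorgensen10} in the connective case, and \cite[Corollary~6.2]{MR2504967} in the coconnective case) and concludes that $A$ has amplitude $0$, hence is formal and quasi-isomorphic to $k$. As written, your argument asserts the conclusion of that theorem without proving it; also note that ``$S$ perfect over $A$ \emph{means} $\dim_k\Ext^*_A(S,S)<\infty$'' is only an implication, not an equivalence, though the direction you need is the correct one.

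The coconnective case in your plan has a second gap: passing to $B=\RHom_A(k,k)$ and then recovering $A\simeq k$ from $B\simeq k$ requires double Koszul duality, i.e.\ $A\simeq\RHom_B(k,k)$, which is not automatic — it needs simple-connectivity or completeness hypotheses, and $\sfH^1(A)=0$ is not part of the hypotheses here. The paper avoids this entirely by first normalising $A$ (via \cite[Lemma~9.5]{efimov2009deformation}) so that $A^0=k$ and $A^{<0}=0$, and then running the \emph{same} amplitude argument with the coconnective amplitude inequality in place of J{\o}rgensen's. To repair your proposal with minimal effort, replace both hand-waved steps by citations to these amplitude inequalities; to keep it self-contained you would have to actually construct the minimal semifree resolution of $k$ and prove it cannot terminate when $\sfH^{<0}(A)\neq 0$, which is the substantive missing argument.
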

\begin{proof}
In the connective case we already know we can assume that $A^i=0$ for $i>0$. In the coconnective case we can assume that $A^0=k$ and $A^i=0$ for $i<0$, by \cite[Lemma 9.5]{efimov2009deformation}. Note that in both cases there is a map of dg algebras $A \to \sfH^0(A)\cong k$, so $\sfH^0(A)$ is a dg module over $A$.
 
Consider first the connective case. We have already noted in Corollary \ref{cor:cg} that $\sfH^0(A) \cong k \in \Perf A$. The main theorem of \cite{jorgensen10} now asserts that for any perfect complex $X \in \Perf A$ with non-zero cohomology, the amplitude 
\begin{equation}
\sf{amp}(X)=\sup \{i \mid \sfH^{-i}(X) \neq 0\} - \inf \{i \mid \sfH^{-i}(X) \neq 0\}
\end{equation}
of $X$ is at least the amplitude of $A$. Since $k \in \Perf A$, we hence deduce that $A$ has amplitude $0$, i.e. $\sfH^i(A) = 0$ if $i\neq 0$. But then $A$ must be formal and so there is a quasi-isomorphism $A\simeq \sfH^*(A) \cong k$.

The analogue of Corollary \ref{cor:cg} holds in the coconnective case and so, using \cite[Corollary 6.2]{MR2504967} as a replacement for J\o{}rgensen's result, the same amplitude argument works in the coconnective setting.
\end{proof}

\begin{cor}\label{cor:notlocal}
Let $k$ be an algebraically closed field. If $A$ is smooth, proper, and connective with $\sfH^0(A)$ local then $A$ is derived Morita equivalent to $k$.
\end{cor}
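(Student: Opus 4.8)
The plan is to reduce immediately to the case where $\sfH^0(A)$ is basic. By Lemma~\ref{lem:basic}, $A$ is derived Morita equivalent to a proper connective DG-algebra $A'$ with $\sfH^0(A')$ basic, and smoothness and properness are preserved by derived Morita equivalence (as is the isomorphism class of the motive, though we won't need that here). Since $k$ is algebraically closed, a basic finite-dimensional local algebra has the form $\sfH^0(A')/\rad(\sfH^0(A')) \cong k$. So after this reduction we are exactly in the situation of Proposition~\ref{prop:notlocal}: $A'$ is smooth, proper, connective, and $\sfH^0(A') \cong k$.

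Applying Proposition~\ref{prop:notlocal} gives that $A'$ is quasi-isomorphic to $k$, hence $\Perf A' \simeq \Perf k$, and composing with the derived Morita equivalence $\Perf A \simeq \Perf A'$ yields that $A$ is derived Morita equivalent to $k$, which is the claim.

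The only genuine content beyond citing Lemma~\ref{lem:basic} and Proposition~\ref{prop:notlocal} is checking that $\sfH^0(A')$ local together with $\sfH^0(A')$ basic (and $k$ algebraically closed) really does force $\sfH^0(A') \cong k$: a basic algebra has $\sfH^0(A')/\rad \cong k \times \cdots \times k$, and locality means this product has a single factor, so $\rad(\sfH^0(A'))$ is the unique maximal ideal with residue field $k$; then Proposition~\ref{prop:notlocal} only needs $\sfH^0(A') \cong k$, which holds precisely when $\rad(\sfH^0(A')) = 0$. So in fact one should be slightly more careful: locality of $\sfH^0(A)$ does not by itself give $\sfH^0(A') \cong k$ — it gives $S := \sfH^0(A')/\rad(\sfH^0(A')) \cong k$. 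The clean route is therefore to invoke Theorem~\ref{th:motives} (or Corollary~\ref{cor:motive}) to get $U(A) \cong U(k)$ together with Proposition~\ref{prop:notlocal} applied after first showing $\rad(\sfH^0(A')) = 0$. The cleanest argument: by Theorem~\ref{thm:criterion}, $S = \sfH^0(A')/\rad(\sfH^0(A')) \in \Perf A'$; and by Proposition~\ref{prop:notlocal}'s method (the J\o rgensen amplitude bound of \cite{jorgensen10}), since $S\cong k\in \Perf A'$ has amplitude $0$, $A'$ has amplitude $0$, so $\sfH^{<0}(A') = 0$ and $A'$ is formal with $\sfH^*(A') = \sfH^0(A')$; then $\sfH^0(A')$ is a smooth proper algebra that is local, hence (being finite-dimensional local and smooth over an algebraically closed field, so separable modulo radical and of finite global dimension) semisimple, hence $\sfH^0(A') \cong k$. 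Thus $A' \simeq k$.

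The main obstacle is bookkeeping rather than mathematics: one must be careful that ``$\sfH^0(A)$ local'' is a strictly weaker hypothesis than ``$\sfH^0(A) \cong k$'', so a direct appeal to Proposition~\ref{prop:notlocal} is not literally available and one needs either the reduction-to-basic step of Lemma~\ref{lem:basic} combined with an argument that the radical vanishes, or the observation that a smooth proper local finite-dimensional algebra over an algebraically closed field is forced to be $k$. Either way, once $\sfH^0$ is pinned down to $k$ up to the derived Morita equivalence, Proposition~\ref{prop:notlocal} closes the argument.
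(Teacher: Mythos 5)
Your instinct that ``$\sfH^0(A)$ local'' is strictly weaker than ``$\sfH^0(A)\cong k$'' is exactly the crux, and you are right that Proposition~\ref{prop:notlocal} cannot be invoked verbatim. But the repair you propose has a gap: you apply J\o{}rgensen's amplitude inequality directly to $A$ (or to the basic replacement $A'$), taking $M=S=\sfH^0(A)/\rad(\sfH^0(A))\in\Perf A$ and concluding $\mathrm{amp}(A)=0$. The amplitude inequality of \cite{jorgensen10} is only used in this paper — and, as far as its hypotheses go, only available — for connective DG-algebras whose degree-zero cohomology is the residue field $k$ itself; that is precisely the situation of Proposition~\ref{prop:notlocal}, and it is the reason the corollary is not a one-line consequence of Theorem~\ref{thm:criterion}. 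For $A$ with $\sfH^0(A)$ a nontrivial local algebra (say $k[x]/(x^2)$-like), the cited theorem does not apply as stated, so the step ``$S\in\Perf A$ has amplitude $0$, hence $A$ has amplitude $0$'' is unjustified. (Everything downstream of that step is fine: a finite-dimensional local algebra of finite global dimension over an algebraically closed field is indeed $k$, by the no-loops/Cartan-determinant argument, and smoothness is a quasi-isomorphism invariant. Also note that Lemma~\ref{lem:basic} is vacuous here: $\sfH^0(A)$ local means $A$ is already indecomposable in $\Perf A$, hence basic.)

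The paper's proof is engineered exactly to reduce to the case where the amplitude inequality is available. By Proposition~\ref{prop:t} the standard t-structure restricts to a bounded t-structure on $\Perf A$ with heart $\modu\sfH^0(A)$; since $\sfH^0(A)$ is local there is a unique simple $S$, every heart object has finite length, and every perfect complex is glued from its cohomologies, so $S$ generates $\Perf A$. Hence $A$ is derived Morita equivalent to $\Lambda=\RHom_A(S,S)$, which is smooth and proper, is \emph{coconnective} because $S$ lies in the heart, and has $\sfH^0(\Lambda)=\End_A(S)\cong k$ by Schur's lemma over the algebraically closed field $k$. Now Proposition~\ref{prop:notlocal} applies — in its coconnective case, which uses \cite[Corollary~6.2]{MR2504967} rather than J\o{}rgensen — and gives $\Lambda\simeq k$. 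If you want to salvage your route you must either prove the connective amplitude inequality for DG-algebras with local (not necessarily trivial) $\sfH^0$, or perform the passage to $\RHom_A(S,S)$ as above; the motivic isomorphism of Theorem~\ref{th:motives} is not a substitute, since an isomorphism in $\mathrm{NChow}(k)$ does not imply derived Morita equivalence.
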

\begin{proof}
By Proposition \ref{prop:t}, $\Perf A$ has a bounded t-structure with heart $\modu \sfH^0(A)$. As $\mathsf{H}^0(A)$ is local it has a unique simple module $S$. Note that $S$ is a generator for $\Perf A$, indeed every object in the heart has finite length and every perfect complex can be glued together from its finitely many cohomology groups.

Thus $A$ is derived Morita equivalent to the dg-endomorphism algebra $\Lambda$ of $S$ as an $A$-module. In particular $\Lambda$ is smooth and proper. We deduce from the fact that $S$ lies in the heart of the standard t-structure that $\Lambda$ is coconnective. Since $S$ is simple and $k$ is algebraically closed we see that $\sfH^0(\Lambda)\cong k$. We are thus in the situation of Proposition~\ref{prop:notlocal} and conclude that $\Lambda$ is quasi-isomorphic to $k$.
\end{proof}

In contrast to Example~\ref{ex:dualnumbers} this shows that there is no smooth proper connective dg algebra with zeroth cohomology $k[x]/(x^2)$. It would be interesting to understand precisely which finite dimensional algebras can occur.



\begin{ack}
We're very grateful to Ben Antieau for a number of helpful comments on a preliminary version of this article, and are particularly indebted to Bernhard Keller for pointing out an erroneous claim made in a previous version (cfr. \S\ref{sec:h0}). We'd also like to express our gratitude to Dmitri Orlov for pointing out to us that Corollary~\ref{cor:fd} could be strengthened to a quasi-isomorphism. Finally, we would like to thank Michel Van den Bergh for comments on \S\ref{sec:georeals}.
\end{ack}




\bibliography{motive-bib}

\end{document}